\newtheorem{theo}{Theorem}[section]
\newtheorem{lem}{Lemma}[section]
\numberwithin{equation}{section}
\newcommand{\lbl}[1]{\label{#1}}
\newcommand{\be}{\begin{equation}}
\newcommand{\ee}{\end{equation}}
\newcommand\bes{\begin{eqnarray}} \newcommand\ees{\end{eqnarray}}
\newcommand{\bess}{\begin{eqnarray*}}
\newcommand{\eess}{\end{eqnarray*}}
\newcommand{\bbb}{\begin{cases}}
\newcommand{\nnn}{\end{cases}}
\newcommand{\bea}{\begin{align*}}
\newcommand{\eea}{\end{align*}}
\newcommand\ep{\varepsilon}
\newcommand\kk{\left}
\newcommand\rr{\right}
\newcommand\dd{\displaystyle}
\newcommand\dx{{\rm d}x}
\newcommand\lm{\lambda}
\newcommand\yy{\infty}
\newcommand\qq{\eqref}
\newcommand\ff{, \ \ \forall \ }
\newcommand\ol{\overline}
\newcommand\ud{\underline}
\begin{document}\thispagestyle{empty}
\begin{center}
 {\Large Dynamics of a nonlinear infection viral propagation model}\\[1mm]
 {\Large with one fixed boundary and one free boundary\footnote{This work was supported by NSFC Grant 12171120}}\\[4mm]
 {\large Mingxin Wang\footnote{{\sl E-mail}: mxwang@hpu.edu.cn}}\\[0.5mm]
 {School of Mathematics and Information Science, Henan Polytechnic University, Jiaozuo, 454003, China}
\end{center}

\begin{quote}
\noindent{\bf Abstract.} In this paper we study a nonlinear infection viral propagation model with diffusion, in which, the left boundary is fixed and with homogeneous Dirichlet boundary conditions, while the right boundary is free. We find that the habitat always expands to the half line $[0, \yy)$, and that the virus and infected cells always die out when the {\it Basic Reproduction Number} $\mathcal{R}_0\le 1$, while the virus and infected cells have persistence properties when $\mathcal{R}_0>1$. To obtain the persistence properties of virus and infected cells when $\mathcal{R}_0>1$, the most work of this paper focuses on the existence and uniqueness of positive equilibrium solutions for subsystems and the existence of positive equilibrium solutions for the entire system.

\textbf{Keywords}: Viral propagation model; Free boundary; Positive equilibrium solutions; Long time behaviors.

\textbf{AMS Subject Classification (2000)}: 35K57, 35B40, 35R35, 92D30
 \end{quote}

 \pagestyle{myheadings}
\section{Introduction}\markboth{A nonlinear infection viral propagation model}{M.X. Wang}
{\setlength\arraycolsep{2pt}

In order to understand the pathogenesis of diseases and seek effective treatment measures, viral dynamics has always been research hotspots (\cite{Wei, NBHB,  Per}), which usually cannot be answered by biological
experimental methods alone but require the help of mathematical models. The basic model of virus dynamics is the following ordinary differential system (\cite{NBHB, Noba, BP, NoM, Wod})
 \bess\left\{\!\begin{array}{ll}
u'= \theta-au-buw,\\
v'=buw-cv,\\
w'=kv-qw,
 \end{array}\right.\eess
where $u$, $v$ and $w$ represent the population of uninfected cells, infected cells and viruses, respectively. In this model, susceptible cells are produced at rate $\theta$, die at rate $au$, and become infected at rate $buw$; infected cells are produced at rate of $buw$ and die at rate of $cv$; free viruses emerge from infected cells at a rate of $kv$ and are removed at a rate of $qw$.
This model is the simplest mathematical model of the interaction of uninfected cells, infected cells, and free viruses, since only the most basic relationships between these three species are incorporated. In nature, biological movement plays a vital role in many biological phenomena (\cite{Br03}). To investigate the impact of spatial dynamics on this model, the authors of \cite{WW07, Sta} extended this  model to include spatially random diffusion.

A nonlinear infection rate can happen due to saturation at high virus concentration, where the infectious fraction is so high that exposure is very likely. Moreover, with the increase of the virus concentration the living environment for cells becomes worse and worse. It is reasonable to think that the rate of infection for virus and the virion production rate for infected cells are both nonlinear. In some situations, the major spatial dispersal comes from the moving (diffusion) of viruses in vivo, while both the uninfected and infected cells are immobile (do not diffuse). The distribution of viruses and infected cells is a local range, which is small relative to the distribution of uninfected
cells. Besides, since the infected cells are caused by viruses, their distribution range is the same. On the other hand, in initial time, viruses are distributed over a local range $\Omega_0$ (initial habitat). Then they spread from boundary to expand their habitat as a result of the spatial dispersal freely.
Based on these observations and the {\it deduction of free boundary conditions} given in \cite{hdu12}, in the one dimensional case, Li et al. \cite{LLW} investigated the free boundary problem:
 \bess\begin{cases}
u_t=\theta-au-\dd\frac{buw}{1+w}=:f_1(u,w),  &t>0, \; x\in\mathbb{R},\\[2mm]
v_t=\dd\frac{buw}{1+w}-cv=:f_2(u,v, w), &t>0, \ g(t)<x<h(t),\\[2mm]
w_t-dw_{xx}=\dd\frac{kv}{1+w}-qw=:f_3(v,w), &t>0, \ g(t)<x<h(t),\\[1mm]
v(t,x)=w(t,x)=0, &t>0, \ x\notin(g(t),h(t)),\\
g'(t)=-\mu w_x(t,g(t)),\,\,\,h'(t)=-\beta w_x(t,h(t)), &t\ge0,\\
u|_{t=0}=u_0(x), \;x\in\mathbb{R};\;\;
(v, w)|_{t=0}=(v_0(x),w_0(x)), &-h_0\le x\le h_0,\\
 h(0)=-g(0)=h_0.
 \end{cases}\eess
They found that the virus cannot spread successfully when $\mathcal{R}_0=kb\theta/(acq)\le 1$, whether the virus successfully spread  depends on the initial value and parameters when $\mathcal{R}_0>1$.

In the situation of uninfected cells, infected cells and viruses all have spatial diffusion capabilities, and they have the same distribution range.
When one end of the habitat (e.g. the left end) is fixed and has homogeneous Dirichlet boundary conditions and the other end is a free boundary, the corresponding model becomes (write $(u,v,w)$ as $(u_1, u_2, u_3)$)
 \bes\begin{cases}
\partial_t u_1-d_1\partial_{xx}u_1=f_1(u_1,u_3),  &t>0, \; 0<x<h(t),\\
\partial_t u_2-d_2\partial_{xx}u_2=f_2(u_1,u_2,u_3), &t>0, \; 0<x<h(t),\\
\partial_t u_3-d_3\partial_{xx}u_3=f_3(u_2,u_3), &t>0, \; 0<x<h(t),\\
u_i=0,\;\;i=1,\,2,\,3, &t>0, \; x\notin(0,h(t)),\\
h'(t)=\dd\sum_{i=1}^3\mu_i\partial_x u_i(t, h(t)),\, &t>0,\\
 h(0)=h_0;\;\;u_i(0,x)=u_{i0}(x),\;\;i=1,\,2,\,3, &0\le x\le h_0,
  \label{1.3}\end{cases}\ees
where parameters are all positive constants, and initial function $u_{i0}$ satisfy
 \bess
u_{i0}\in C^2([0,h_0]),\;\;u_{i0}>0\;\;\text{in}\,(0,h_0),
\;\;u_{i0}(0)=u_{i0}(h_0)=0,\;\;i=1,2,3.
 \eess

The main purpose of this paper is to study the dynamics of \qq{1.3}.
By combining the methods in references \cite{WZjde18, Wdcds19}, we can prove the following theorem concerning the existence, uniqueness, regularity and estimates of global solution of \qq{1.3}.

\begin{theo}\lbl{th1.1} \, The problem \qq{1.3} admits a unique global solution $(u_1,u_2,u_3, h)$ and
 \bess
  u_i\in C^{1+\frac{\alpha}2,\,2+\alpha}((0,\yy)\times(0, h(t)),\;i=1,2,3; \;h \in C^{1+\frac{1+\alpha}2}([0,\infty)),\;\;h'(t)>0\;\;{\rm in}\;\;(0, \yy).
 \eess
Moreover, there exists $C_0>0$ such that $0<u_i\le C_0$ in $ [0,\yy)\times(0, h(t))$, and
  \bess
  \|u_i(t,\cdot)\|_{C^1([0,\,h(t)])}\leq C_0, \;\;\forall \; t\ge  1,\; i=1,2,3; \;\;\;\|h'\|_{C^{{\alpha}/2}([1,\yy))}\leq C_0. \eess
 \end{theo}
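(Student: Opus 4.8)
The plan is to follow the now-standard programme for one-phase Stefan-type free boundary problems, adapting the arguments of \cite{WZjde18, Wdcds19} to the coupled three-component system \qq{1.3}. First I would establish local existence and uniqueness by straightening the free boundary: the change of variables $y=h_0 x/h(t)$ maps the moving interval $[0,h(t)]$ onto the fixed interval $[0,h_0]$ and transforms \qq{1.3} into a parabolic system on $[0,h_0]$ whose coefficients depend on the unknowns $h$ and $h'$. For a given $h$ in a suitable closed ball of $C^{1+\ap/2}([0,T])$ with $h(0)=h_0$, the transformed system is solvable by $L^p$ parabolic theory together with Sobolev embedding (or directly by Schauder estimates), producing $u_i$; feeding $\pl_x u_i(t,h(t))$ into the free boundary condition then defines a map $h\mapsto\tilde h$. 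Using the a priori Hölder bounds from parabolic regularity, one checks that this map is a contraction on $C^{1+\ap/2}([0,T])$ for $T$ small, and the Banach fixed point theorem yields a unique local solution with the stated interior regularity; the regularity $h\in C^{1+(1+\ap)/2}$ follows from the boundary condition once the trace $\pl_x u_i(t,h(t))$ is known to be $C^{(1+\ap)/2}$ in $t$.

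Next come the a priori bounds, which drive the global existence. Positivity $u_i>0$ in the interior follows from the strong maximum principle, since the data are positive inside $(0,h_0)$ and the reactions are quasipositive: $f_1\ge 0$ when $u_1=0$, $f_2\ge0$ when $u_2=0$, and $f_3\ge0$ when $u_3=0$ (for nonnegative components), so the nonnegative orthant is invariant. The $L^\infty$ bounds are obtained in cascade, exploiting the precise form of the $f_i$: from $f_1\le\theta-au_1$, comparison with $U'=\theta-aU$ gives $u_1\le C_1:=\max\{\|u_{10}\|_\yy,\theta/a\}$; since $bu_1u_3/(1+u_3)\le bC_1$ we get $f_2\le bC_1-cu_2$ and hence $u_2\le C_2:=\max\{\|u_{20}\|_\yy,bC_1/c\}$; finally $ku_2/(1+u_3)\le kC_2$ gives $f_3\le kC_2-qu_3$ and $u_3\le C_3:=\max\{\|u_{30}\|_\yy,kC_2/q\}$. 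Crucially these bounds are independent of $t$ and of the (a priori unknown) interval length, which is what later makes the uniform estimates possible. By Hopf's lemma at the moving boundary, $\pl_x u_i(t,h(t))<0$, and the Stefan-type free boundary condition then gives $h'(t)>0$.

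The main obstacle, as usual, is the upper bound on the free boundary speed $h'(t)$, equivalently a uniform bound on $\pl_x u_i(t,h(t))$. I would obtain it by constructing, for each component, an auxiliary function of the form $w_i(t,x)=C_0\big(2M(h(t)-x)-M^2(h(t)-x)^2\big)$ on the narrow moving strip $\oo_M=\{(t,x): h(t)-M^{-1}<x<h(t)\}$, where $C_0:=\max_i C_i$ dominates all $u_i$. One checks $w_i(t,h(t))=0$, $w_i\ge C_0\ge u_i$ on the inner edge $x=h(t)-M^{-1}$, and—choosing $M$ large depending only on the $C_i$, the $d_i$, and the uniform upper bounds on the reactions, but not on $t$—that $w_i$ is a supersolution (here $w_{i,t}\ge0$ since $h'>0$, while $-d_i w_{i,xx}=2d_iM^2C_0$ absorbs $\sup f_i$). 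Comparison on $\oo_M$ yields $|\pl_x u_i(t,h(t))|\le 2C_0M$, whence $0<h'(t)\le C$ uniformly in $t$. With $0<u_i\le C_0$ and $0<h'\le C$ in hand, no finite-time blow-up can occur: on any finite interval the domain stays bounded and parabolic estimates provide uniform Hölder control up to the maximal time, so the continuation argument extends the local solution to a global one on $[0,\yy)$.

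Finally, the regularity and the $t\ge1$ estimates follow by a localized bootstrap. On the straightened domain, applying interior $L^p$ and then Schauder estimates on the time windows $[t-1,t]$—using the $t$-independent $L^\infty$ bounds and the bound on $h'$—gives $\|u_i(t,\cdot)\|_{C^1([0,h(t)])}\le C_0$ for all $t\ge1$; combining this with the uniform Hölder control of $\pl_x u_i$ at $x=h(t)$ and the free boundary condition yields $\|h'\|_{C^{\ap/2}([1,\yy))}\le C_0$. The one subtlety is that the habitat grows with $t$, but since every constant produced above is independent of the interval length, the estimates obtained on the fixed $y$-domain transfer back to the physical domain uniformly in $t$.
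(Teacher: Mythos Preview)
Your proposal is correct and follows essentially the same approach as the paper: the paper itself does not give a detailed proof of Theorem~\ref{th1.1} but simply asserts that it follows ``by combining the methods in references \cite{WZjde18, Wdcds19}'', and the programme you outline (straighten the boundary, contraction mapping for local existence, cascaded $L^\infty$ bounds via comparison, Hopf lemma for $h'>0$, barrier on a narrow strip for the uniform bound on $h'$, continuation to global existence, and localized Schauder estimates on time windows $[t-1,t]$ for the uniform-in-$t$ $C^1$ and H\"older bounds) is precisely the standard machinery developed in those references.
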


Take advantage of Theorem \ref{th1.1} and \cite[Lemma 8.7]{Wpara} it can be deduced that
 \[h_\yy=\lim_{t\to\yy}h(t)=\yy.\]

The paper is organized as follows. Section 2 concerns with positive solutions of the corresponding equilibrium system \qq{2.1} and its subsystems in half line $[0,\yy)$. We first prove the existence and uniqueness of bounded positive solutions for a general system \qq{2.4} in $[0,\yy)$ with two components and then construct the ordered coupled positive upper and lower solutions of \qq{2.1}. The ordered coupled positive upper and lower solutions constructed here can be used to prove the existence of the positive solution of \qq{2.1}, as well as to study the long time properties of the solution component $(u_1, u_2, u_3)$ of \qq{1.3}. Next, we will use the upper and lower solution methods and the topological degree theory in cones, respectively, to prove the existence of positive solutions of \qq{2.1}. Section 3 concerns with the long time properties of the solution components $(u_1, u_2, u_3)$ of \qq{1.3}.

There have been a lot works on the local diffusion equations and systems where only one end is a free boundary and the other one is fixed with homogeneous Dirichlet or mixed boundary condition, please refer to \cite{ZW18} for reaction-diffusion-advection equation, \cite{Wjde15, Wjfa16} for the logistic equation with sign-changing coefficient, \cite{Wjde14, Wcnsns15} for prey-predator model and \cite{WZjdde14} for competition model. For the work on non-local diffusion equations and systems where only one end is a free boundary, see the literatures \cite{LWZjde22, LLW-Z22, LLW-P24}. For the relevant results and progress on free boundary problems with local and/or non-local diffusions, the interested readers can refer to the survey paper \cite{Du22}.

\section{The equilibrium problem}

In this section we shall study the positive solutions of
 boundary value problems in half line:
  \bes\begin{cases}
-d_1U_1''=f_1(U_1,U_3), \; &0<x<\yy,\\
-d_2U_2''=f_2(U_1,U_2,U_3), &0<x<\yy,\\
-d_3U_3''=f_3(U_2,U_3), &0<x<\yy,\\
U_1=U_2=U_3=0, & x=0.
  \label{2.1}\end{cases}\ees
It is well known that the problem
  \bes\begin{cases}
 -d_2U_1''=\theta -aU_1,\;\;0<x<\yy,\\
 U_1(0)=0
 \end{cases}\lbl{2.2}\ees
has a unique bounded positive solution $\ol U_1(x)$, and $\ol U_1'(x)>0$ in $[0,\yy)$ and
$\lim_{x\to\yy}\ol U_1(x)=\theta/a$.

\begin{theo}\lbl{th2.1} If $\mathcal{R}_0={kb\theta }/(acq)\le 1$, then \qq{2.1} has no positive solution.
\end{theo}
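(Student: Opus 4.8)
My plan is to argue by contradiction: suppose \qq{2.1} admits a bounded positive solution $(U_1,U_2,U_3)$, and show that the cooperative structure of the $(U_2,U_3)$-equations is incompatible with boundedness once $\mathcal{R}_0\le1$. First I would bound $U_1$: since the infection term $bU_1U_3/(1+U_3)$ is nonnegative, the first equation gives $-d_1U_1''\le\theta-aU_1$, and comparing on $[0,\infty)$ with $\overline U_1$ yields $0<U_1\le\overline U_1<\theta/a$ on $(0,\infty)$. Using this together with $U_3\ge0$ and the elementary estimates $U_1/(1+U_3)\le\theta/a$ and $1/(1+U_3)\le1$, I would reduce the last two equations to the pair of differential inequalities
\begin{align*}
-d_2U_2''&\le \tfrac{b\theta}{a}\,U_3-cU_2,\\
-d_3U_3''&\le kU_2-qU_3,
\end{align*}
valid on $(0,\infty)$; in vector form $-DU''\le MU$ with $U=(U_2,U_3)^{T}$, $D=\mathrm{diag}(d_2,d_3)$ and $M=\begin{pmatrix}-c & b\theta/a\\ k & -q\end{pmatrix}$.

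The heart of the argument is algebraic. I would observe that $D^{-1}M$ has nonnegative off-diagonal entries and is irreducible, hence is an irreducible Metzler matrix; by Perron--Frobenius theory its spectral abscissa $s:=s(D^{-1}M)$ is a real eigenvalue admitting a strictly positive left eigenvector $(\alpha,\beta)$, $\alpha,\beta>0$. Since $\mathrm{tr}(D^{-1}M)=-c/d_2-q/d_3<0$ and $\det(D^{-1}M)=\frac{cq}{d_2d_3}(1-\mathcal{R}_0)$, for a $2\times2$ Metzler matrix with negative trace one has $s\le0$ if and only if $\det(D^{-1}M)\ge0$, i.e. if and only if $\mathcal{R}_0\le1$. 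Thus the hypothesis $\mathcal{R}_0\le1$ is exactly the condition $s\le0$.

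Then I would set $z:=\alpha U_2+\beta U_3>0$ on $(0,\infty)$. Multiplying $-DU''\le MU$ by $D^{-1}$ (a positive diagonal, so the inequality is preserved componentwise) and then by the positive row vector $(\alpha,\beta)$, and using $(\alpha,\beta)D^{-1}M=s(\alpha,\beta)$, I obtain $-z''\le sz\le0$, so $z''\ge0$; that is, $z$ is convex on $(0,\infty)$. A convex function bounded on $[0,\infty)$ must be nonincreasing (an interior point with positive slope would, by monotonicity of $z'$, force linear and hence unbounded growth). Since $z(0)=0$ and $z\ge0$, being nonincreasing forces $z\equiv0$, contradicting $z>0$. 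This closes the argument, and it treats the borderline case $\mathcal{R}_0=1$ (where $s=0$) on exactly the same footing as $\mathcal{R}_0<1$.

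The main obstacle I anticipate is the behavior at infinity: on the half-line a positive subsolution of the cooperative system need not decay, so a direct maximum-principle or sliding argument against a small multiple of a positive eigenvector does not obviously close, because one cannot control a possible ``touching point at infinity.'' The device above sidesteps this by projecting the vector inequality onto the positive left Perron eigenvector, converting the vector problem into a single scalar convex function for which boundedness on $[0,\infty)$ is immediately decisive. The only analytic inputs are the bound $U_1\le\theta/a$ and the boundedness of $(U_2,U_3)$, both consistent with the standing convention that positive equilibrium solutions are bounded.
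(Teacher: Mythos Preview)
Your argument is correct, but it takes a different route from the paper. Both begin identically by comparing with $\overline U_1$ to obtain $U_1\le\theta/a$ on $(0,\infty)$. From there the paper invokes a comparison with the half-line Neumann problem
\[
-d_2V_2''=f_2(\theta/a,V_2,V_3),\quad -d_3V_3''=f_3(V_2,V_3),\quad V_2'(0)=V_3'(0)=0,
\]
asserting (without details) that when $\mathcal{R}_0\le1$ its only nonnegative solution is zero, and then concludes $(U_2,U_3)=(0,0)$ by comparison. Your approach instead linearizes the $(U_2,U_3)$ subsystem to the cooperative inequality $-DU''\le MU$, projects onto the positive left Perron eigenvector of $D^{-1}M$ to produce a single scalar function $z=\alpha U_2+\beta U_3$ with $z''\ge 0$, and closes with the elementary fact that a bounded convex function on $[0,\infty)$ vanishing at $0$ and staying nonnegative must be identically zero. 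What this buys you is a completely self-contained argument that handles the borderline case $\mathcal{R}_0=1$ transparently (via $s=0$) and avoids appealing to a half-line Neumann comparison principle whose justification the paper leaves to the reader; the paper's route is terser but leans on an auxiliary nonexistence claim that itself requires an argument of roughly the same weight as yours. Both proofs ultimately rest on the boundedness of $(U_1,U_2,U_3)$, which is the standing convention throughout Section~2.
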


\begin{proof} Let $(U_1, U_2, U_3)$ be a nonnegative solution of \qq{2.1}. Then $U_1(x)\le \ol U_1(x)$ by the comparison principle. So, $U_1(x)\le\theta/a$, and then  $(U_2, U_3)$ satisfies
 \bess\begin{cases}
-d_2U_2''\le f_2(\theta/a, U_2, U_3), &0<x<\yy,\\
-d_3U_3''=f_3(U_2, U_3), &0<x<\yy,\\
U_2=U_3=0, & x=0.
  \end{cases}\eess
Since ${kb\theta }/(acq)\le 1$, it is easy to show that the nonnegative solution of
\bess\begin{cases}
-d_2V_2''=f_2(\theta/a, V_2, V_3), &0<x<\yy,\\
-d_3V_3''=f_3(V_2, V_3), &0<x<\yy,\\
V_2'=V_3'=0, & x=0
  \end{cases}\eess
can only be zero. It follows that $(U_2, U_3)=(0,0)$ by the comparison principle.
\end{proof}

\subsection{The positive solutions of sub-systems of \qq{2.1}}\lbl{s2.1}

We first consider a general boundary value problem with two components. Assume that $\beta>0$ and satisfies
 $$bk\beta>cq.$$
Suppose that
 \bes
 \rho\in C^\alpha_{\rm loc}([0,\yy)),\;\;\rho\ge0 \;\;{\rm and}\;\; \lim_{x\to\yy}\rho(x)=\beta.
  \lbl{2.3}\ees
We first show that the problem
 \bes\begin{cases}
-d_2U_2''=f_2(\rho(x), U_2, U_3)=\dd\frac{b \rho(x)U_3}{1+U_3}-cU_2=:g_2(x,U_2, U_3), &0<x<\yy,\\[3mm]
-d_3U_3''=f_3(U_2, U_3)=\dd\frac{kU_2}{1+U_3}-qU_3=:g_3(U_2, U_3), &0<x<\yy,\\[1mm]
U_2=U_3=0, & x=0
  \end{cases}\label{2.4}\ees
has a unique bounded positive solution.

Take $l>0$ and consider the following boundary value problem
\bes\begin{cases}
-d_2U_2''=g_2(x,U_2, U_3), &0<x<l,\\
-d_3U_3''=g_3(U_2, U_3), &0<x<l,\\
U_2=U_3=0, & x=0, l.
  \end{cases}\label{2.5}\ees
Let $\lm_1(l)>0$ be the principle eigenvalue of the following eigenvalue problem
 \bes\begin{cases}
 -\psi''=\lm_1\psi,\;&l<x<2l,\\
 \psi=0,&x=l, 2l.
 \end{cases}\lbl{2.7}\ees
Then $\lim_{l\to\yy}\lm_1(l)=0$. By use of \qq{2.3} and $bk\beta>cq$, there exist $0<\ep\ll1$ and $l^*\gg 1$ such that
 \bes
  \rho(x)>\beta-\ep \;\;{\rm in}\;\; [l^*, \yy)
 \lbl{2.8}\ees
and
  \bes
 bk\kk(\beta-\ep\rr)>(c+d_2\lm_1(l))(q+d_3\lm_1(l)),\;\;\forall\, l\ge l^*.
 \lbl{2.6}\ees

\begin{theo}\lbl{th2.2} Let $bk\beta>cq$ and $l^*$ be given in the above. Then the problem \qq{2.5} has a unique positive solution $(U_{2l},  U_{3l})$ when $l> 2l^*$, and
 \bes
 \lim_{l\to\yy}(U_{2l}, U_{3l})=(U_2, U_3)\;\;\;{\rm in}\;\;
 [C^2_{\rm loc}([0,\yy))]^2,
 \lbl{2.9a}\ees
where $(U_2, U_3)$ is the unique bounded positive solution of \qq{2.4}. Moreover, if $\rho'(x)\ge,\,\not\equiv 0$ in $[0,\yy)$, then $U_2',\, U_3'>0$ in $[0,\yy)$ and
\bes
\lim_{x\to\yy}(U_2, U_3)=
\big(b\beta\big(1-\sqrt{cq/(bk\beta)}\,\big)/c,\; \sqrt{bk\beta/(cq)}-1\big).
\lbl{2.9}\ees
\end{theo}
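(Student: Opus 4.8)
The plan is to establish existence/uniqueness on the finite interval $(0,l)$ first, then pass to the limit $l\to\yy$ to recover the problem on the half line, and finally analyze monotonicity and the limit at infinity under the extra hypothesis on $\rho$.

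For the finite-interval problem \qq{2.5}, I would proceed by the method of coupled upper and lower solutions, exploiting the quasimonotone (cooperative) structure of the pair $(g_2,g_3)$: since $\partial_{U_3}g_2=b\rho/(1+U_3)^2\ge 0$ and $\partial_{U_2}g_3=k/(1+U_3)\ge 0$, both off-diagonal couplings are nonnegative on the relevant range. For the \emph{lower} solution I would use a small multiple $\de\psi$ of the principal eigenfunction $\psi$ of \qq{2.7} (shifted to $[0,l]$), where \qq{2.6} guarantees that $bk(\beta-\ep)$ exceeds the product $(c+d_2\lm_1)(q+d_3\lm_1)$; this spectral gap is exactly what forces a strictly positive subsolution to exist and makes the trivial solution unstable, so a positive solution must bifurcate. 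For the \emph{upper} solution I would take a suitable large constant $M$ (or a constant pair), using that $g_2\le b\beta-cU_2$ and $g_3\le kU_2-qU_3$ after bounding $U_3/(1+U_3)<1$, together with the a priori bound $U_2\le b\beta/c$ obtained from the maximum principle. Existence of $(U_{2l},U_{3l})$ then follows from the standard monotone-iteration/upper-lower-solution theorem for cooperative elliptic systems. Uniqueness I would get from a sweeping/sliding argument or from the concavity of the nonlinearities in $U_3$ (the maps $U_3\mapsto U_3/(1+U_3)$ and $U_3\mapsto 1/(1+U_3)$ are strictly concave/monotone), which rules out two distinct ordered positive solutions.

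To pass to the limit I would first derive $l$-independent $C^0$ bounds (the constant upper solution $M$ works for every $l$), then bootstrap via interior elliptic $L^p$ and Schauder estimates to get $l$-independent $C^{2+\alpha}_{\rm loc}$ bounds. A diagonal/compactness argument extracts a subsequence converging in $C^2_{\rm loc}([0,\yy))$ to a solution $(U_2,U_3)$ of \qq{2.4}; the lower-solution bound survives the limit, so the limit is positive and nontrivial. That this limit is the \emph{unique} bounded positive solution of \qq{2.4}, and that the \emph{whole} family converges (so \qq{2.9a} holds without passing to a subsequence), I would obtain by a monotonicity-in-$l$ argument: the solutions $U_{il}$ should be monotone increasing in $l$ (a larger domain admits a larger lower solution), so the limit is forced and is independent of the extracting subsequence.

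For the final monotonicity and asymptotic claims under $\rho'\ge,\,\not\equiv 0$, I would differentiate the system: writing $P_i=U_i'$, the pair $(P_2,P_3)$ satisfies a linear cooperative system with source term $b\rho'(x)U_3/(1+U_3)\ge,\,\not\equiv0$, and $P_i(0)=U_i'(0)>0$ by the Hopf boundary lemma; a maximum-principle argument then yields $U_2',U_3'>0$ on $[0,\yy)$. Monotonicity and boundedness give the existence of limits $(U_2(\yy),U_3(\yy))$, and since the solution flattens ($U_i''\to 0$) while $\rho\to\beta$, the limit must solve the algebraic kinetic system $b\beta U_3/(1+U_3)=cU_2$, $kU_2/(1+U_3)=qU_3$; solving these two equations gives exactly the pair in \qq{2.9}. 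I expect the \textbf{main obstacle} to be the uniqueness of the bounded positive solution on the half line together with the full (non-subsequential) convergence in \qq{2.9a}: the half-line setting lacks compactness and a clean eigenvalue characterization, so I anticipate the monotonicity-in-$l$ structure and a careful sandwiching between the constructed upper and lower solutions will be the delicate technical core, more so than the routine a priori estimates or the elementary algebra behind \qq{2.9}.
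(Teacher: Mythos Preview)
Your plan matches the paper on uniqueness for \qq{2.5} (the same sweeping argument), on the monotonicity-in-$l$ plus interior Schauder estimates giving full convergence \qq{2.9a}, and on the algebra behind \qq{2.9}. Three points, however, diverge from the paper, and one of them is a genuine gap.

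\textbf{Existence on $(0,l)$.} The paper uses the fixed-point index in a cone rather than sub/super-solutions. Your eigenfunction subsolution $\de\psi$ runs into trouble because the hypothesis on $\rho$ is only $\rho\ge 0$ with $\rho(x)\to\beta$, so $\rho$ may vanish on an initial stretch of $[0,l]$; the $U_2$-inequality for a subsolution of the form $\de\psi$ with $\psi$ the Dirichlet eigenfunction on $(0,l)$ then fails near $x=0$. One can salvage the idea by taking $\psi$ supported on $[l^*,2l^*]$ (where \qq{2.8} holds) and extending by zero, but the result is only a weak subsolution with corners, a point you do not address. The degree route avoids this: one tests the linearization at $(0,0)$ against the eigenfunction on $[l^*,2l^*]$ and uses \qq{2.6} to conclude ${\rm index}_W(G,(0,0))=0$.

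\textbf{Uniqueness on $[0,\yy)$.} You correctly flag this as the main obstacle, but a concrete ingredient is missing from your plan. The sweeping argument on the half line cannot even be initialized---one cannot find $\xi>1$ with $\xi(U_2,U_3)\ge(\hat U_2,\hat U_3)$ on all of $[0,\yy)$---unless one first establishes a uniform positive lower bound $U_2(x),U_3(x)\ge\tau>0$ for $x\ge 1$. The paper devotes a substantial argument to exactly this, ruling out $\liminf_{x\to\yy}U_i(x)=0$ through a sequence of claims that analyze local extrema of $U_2,U_3$ and exploit $bk\beta>cq$. ``Sandwiching between upper and lower solutions'' does not by itself supply this bound.

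\textbf{Monotonicity under $\rho'\ge,\not\equiv0$.} Differentiating and invoking a maximum principle for $(U_2',U_3')$ is natural but does not go through as you describe: on $(0,l)$ with zero Dirichlet data the solutions $U_{il}$ satisfy $U_{il}'(l)<0$ and are not monotone, while on $[0,\yy)$ you have no boundary information at infinity to feed into the maximum principle for the cooperative linearized system. The paper takes a different route: it introduces an auxiliary problem on $(0,l)$ with boundary values $U_i(l)=\zeta$ for a large constant $\zeta$, produces an \emph{increasing} solution via a Schauder fixed point in a cone of monotone functions, lets $l\to\yy$, and then uses the half-line uniqueness already proved to identify this monotone limit with $(U_2,U_3)$. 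Only after monotonicity is secured does the paper differentiate, on finite intervals with the now-available boundary sign $U_i'(l)\ge 0$, to upgrade $U_i'\ge0$ to $U_i'>0$.
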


The proof of Theorem \ref{th2.2} will be divided into three lemmas.
In the first lemma, we first use the theory of topological degree in cones to prove the existence of positive solution $(U_{2l},  U_{3l})$ of \qq{2.5}, and then prove the uniqueness by the comparison arguments. In the second lemma, we first prove the limit \qq{2.9a} and prove that the limit function $(U_2, U_3)$ has a positive lower bound, and then prove the uniqueness of bounded positive solution of \qq{2.4} by the comparison arguments. In order to prove $U_2', U_3'>0$ in $[0,\yy)$ when $\rho'(x)\ge,\,\not\equiv 0$, in the third lemma we first construct the suitable boundary value problems in bounded intervals $(0,l)$ and then show that their unique positive solutions are increasing in $(0,l)$. Then we prove that the limit $(U_2, U_3)$ of these positive solutions is a bounded positive solution of \qq{2.4} and satisfies $U_2', U_3'>0$ in $[0,\yy)$.

Let
 \[U=(U_2, U_3),\;\;\mathscr{L}={\rm dig}( -d_2\Delta,-d_3\Delta),\;\;\;g(x,U)=(g_2(x,U),\,g_3(U)),\]
where $\Delta=\,''$. Then \qq{2.5} can be written as
 \bess\begin{cases}
 \mathscr{L}U=g(x,U), &0<x<l,\\
 U=(0,0),&x=0,\; l.
 \end{cases}\eess
Set
 \bess
  E&=&X^2 \;\;\text{with}\;\; X=\{z\in C^1([0,l]):\, z(0)=z(l)=0\},\\[1mm]
  W&=&K^2\;\;\text{with}\;\; K=\{z\in X:\, z\geq 0\}.
 \eess
For $U\in W$, we define
 \bess
 W_{U}&=&\{V\in E:\, \exists\; r>0\;\; \text{s.t.}\;\;U+tV\in
W,\,\;\forall\; 0\leq t\leq r\},\\
 S_{U}&=&\{V\in\overline{W}_{U}:-V\in\overline{W}_{U}\}.
 \eess

Let $0\le\tau\le 1$ and $U^\tau$ be a nonnegative solution of
\bes\begin{cases}
 \mathscr{L}U=\tau g(x,U), &0<x<l,\\
 U=0,&x=0,\; l.
 \end{cases}\lbl{2.10}\ees
Set $\beta_m=\sup_{[0,\yy)}\rho(x)$. Then we have $U_2^\tau(x)\le b\beta_m/c$ and $U_3^\tau(x)\leq bk\beta_m/(cq)$ in $[0, l]$ by the maximum principle, and $\|U^\tau\|_{C^1([0, l])}\le C$ for all $0\le\tau\le 1$ by the standard elliptic theory and imbedding theorem. Define
  \[\mathcal{O}=\{U\in W:\, \|U\|_{C^1([0,l])}<C+1\}.\]
Then the problem \qq{2.10} has no solution on $\partial{\mathcal O}$.
Take a large constant $M$ such that
 \[\tau g(x,U)+MU\ge(0,0),\;\;
 \forall\; x\in[0,l],\; U\in\ol{\mathcal{O}},\;0\le\tau\le 1,\]
and define operator $G_\tau$ by
 \[G_\tau(U)=(\mathscr{L}+M)^{-1}(\tau g(x,U)+MU),\;\;U\in E.\]
Then $G_\tau:\, \mathcal{O}\to W$ is positive and compact for any $0\le\tau\le 1$, and $U\in{\mathcal O}$ is a solution of \eqref{2.1} if and only if $G_1(U)=U$.
Moreover, $G_\tau(U)\not=U$ for any $0\leq\tau\leq 1$ and $U\in\partial{\mathcal O}$. Thus, by the homotopy invariance we have
 \bess
 {\rm deg}_W(I-G_\tau, {\mathcal O})={\rm deg}_W(I-G_0, {\mathcal O})=1,\;\;\forall \, 0\leq \tau\leq 1.
 \eess
The proof of ${\rm deg}_W(I-G_0, {\mathcal O})=1$ is standard (cf. \cite[Lemma 5.1]{WPbook24}). We denote $G_1=G$.

\begin{lem}\lbl{l2.1} Let $bk\beta/(cq)>1$ and $l^*$ be given in the above. Then the problem \qq{2.5} has a unique positive solution $(U_{2l},  U_{3l})$ when $l> 2l^*$.
\end{lem}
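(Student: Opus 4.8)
The plan is to upgrade the degree identity $\mathrm{deg}_W(I-G,\mathcal O)=1$, already established above, to the existence of a \emph{nontrivial} fixed point by computing the fixed point index of $G$ at the trivial solution $0\in\mathcal O$. If I can show $\mathrm{index}_W(G,0)=0$, then by the excision and additivity properties of the index there must be a fixed point $U_l=(U_{2l},U_{3l})\in\mathcal O$ with $U_l\neq 0$; being a nonnegative solution of \eqref{2.5} with fully coupled nonlinearities, the strong maximum principle and the Hopf boundary lemma then force $U_{2l},U_{3l}>0$ in $(0,l)$ (were one component $\equiv 0$, the coupling would drag the other down to $0$ as well).

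To obtain $\mathrm{index}_W(G,0)=0$ I would verify that the linearization $G'(0)=(\mathscr L+M)^{-1}(Dg(0)+M)$, with $Dg(0)=\left(\begin{smallmatrix}-c & b\rho(x)\\ k & -q\end{smallmatrix}\right)$, has spectral radius $r(G'(0))>1$; this is equivalent to the principal eigenvalue $\mu_1$ of the cooperative operator $\mathscr L-Dg(0)$ on $(0,l)$ under homogeneous Dirichlet conditions being negative, and the standard index lemma (cf. \cite[Lemma 5.1]{WPbook24}) then yields $\mathrm{index}_W(G,0)=0$. To show $\mu_1<0$ I would localize to the subinterval $I_0=(l^*,2l^*)$, which lies in $(0,l)$ precisely because $l>2l^*$ and on which $\rho>\beta-\ep$ by \eqref{2.8}. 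Taking $\psi>0$ with $-\psi''=\lm_1(l^*)\psi$ and $\psi=0$ on $\partial I_0$, the ansatz $(\phi_2,\phi_3)=(\gamma_2\psi,\gamma_3\psi)$ reduces the eigenvalue problem on $I_0$ (with $\rho$ replaced by the constant $\beta-\ep$) to the $2\times2$ matrix with diagonal entries $c+d_2\lm_1(l^*),\ q+d_3\lm_1(l^*)$ and off-diagonal entries $-b(\beta-\ep),\ -k$. Its determinant is negative exactly by \eqref{2.6}, so it admits a negative eigenvalue with a positive eigenvector $(\gamma_2,\gamma_3)$, making $(\phi_2,\phi_3)$ a positive eigenfunction with negative eigenvalue on $I_0$. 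Extending it by zero and using $\rho\ge\beta-\ep$ on $I_0$ together with the monotonicity of the principal eigenvalue of cooperative systems in the weight and in the domain then delivers $\mu_1<0$ on $(0,l)$. I expect this index-at-zero computation to be the main obstacle.

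For uniqueness I would argue by comparison, exploiting that the reaction field is cooperative, since $\partial_{U_3}g_2=b\rho/(1+U_3)^2>0$ and $\partial_{U_2}g_3=k/(1+U_3)>0$, and strictly sublinear, since $s\mapsto g_i(x,sU_2,sU_3)/s$ is strictly decreasing for $(U_2,U_3)>0$ and $s>0$. Given two positive solutions $U=(U_2,U_3)$ and $V=(V_2,V_3)$ of \eqref{2.5}, I would set $\tau=\sup\{s>0:\ sV_i\le U_i\ \text{on}\ [0,l],\ i=2,3\}$; the Hopf lemma at $x=0$ and $x=l$ guarantees $0<\tau<\yy$. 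If $\tau<1$, strict sublinearity makes $\tau V$ a strict lower solution, because $-d_i(\tau V_i)''=\tau g_i(V)<g_i(\tau V)$, so the comparison principle for cooperative systems gives $\tau V_i<U_i$ strictly in $(0,l)$ with a strict gap at the endpoints; this would allow $\tau$ to be increased, contradicting its definition. Hence $\tau\ge 1$, i.e. $V\le U$, and by symmetry $U=V$, which is the claimed uniqueness.
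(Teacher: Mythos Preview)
Your proposal is correct and follows essentially the same route as the paper: both compute $\mathrm{index}_W(G,0)=0$ by exploiting the positive Dirichlet eigenfunction on the subinterval $(l^*,2l^*)$ together with conditions \eqref{2.8}--\eqref{2.6}, and both prove uniqueness by a sliding argument based on the strict sublinearity of $g_2,g_3$. The only differences are organizational: you package the spectral step as ``principal eigenvalue of $\mathscr{L}-Dg(0)$ is negative'' via a test-function/domain-monotonicity argument, whereas the paper verifies the two Dancer-type hypotheses (no fixed direction in $W$ for $G'(0)$, and an eigenvalue $\mu>1$) by multiplying against $\psi$ and integrating by parts; and you compare two arbitrary positive solutions directly, whereas the paper first builds a maximal solution by monotone iteration and then runs the same $\xi$-argument.
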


\begin{proof} It is clear that $(0, 0)$ is the unique trivial nonnegative solution of \qq{2.5}. Now we calculate ${\rm index}_W(G, (0,0))$. By the direct calculations,
 \[\overline{W}_{(0,0)}=W,\;\;S_{(0,0)}=\{(0,0)\},\;\;
 G'(0,0)=(\mathscr{L}+M)^{-1}\kk(\begin{array}{ccc}
  M-c \;\; &b \rho(x)\\
 k \; \; & M-q\end{array} \rr). \]

{\it Step 1}. We first show that the problem
 \bes
 G'(0,0)\phi=\phi,\;\;\phi\in \overline{W}_{(0,0)}=W
 \lbl{2.11}
 \ees
has only the zero solution when $l>2l^*$. Let $\phi=(\phi_2, \phi_3)\in W$ be a  solution of \qq{2.11}. Then $\phi_i\ge 0$ in $(0, l)$, $i=1,2$ and
 \bes\begin{cases}
  -d_2\phi_2''=-c\phi_2+b \rho(x)\phi_3, &0<x<l,\\
 -d_3\phi_3''=k\phi_2-q\phi_3, \;\;&0<x<l,\\
 \phi_i(0)=\phi_i(l)=0,\;\;&i=2, 3.
 \end{cases}\lbl{2.12}\ees
We claim that $(\phi_2, \phi_3)=(0,0)$.

Assume on the contrary that $\phi_3\not\equiv 0$, then $\phi_2\not\equiv 0$. So $\phi_3>0$ and $\phi_2>0$ in $(0,l)$ as $\rho\ge,\not\equiv 0$ in $[0,l]$. Let $\lm_1(l^*)$ be the principle eigenvalue of \qq{2.7} with $l=l^*$ and $\psi$ be the corresponding positive eigenfunction. Then $\psi'(l^*)>0$, $\psi'(2l^*)<0$. Noticing that $l>2l^*$, we have $\phi_i(l^*), \phi_i(2l^*)>0$ for $i=2,3$. Multiplying the first two equations of \qq{2.12} by $\psi$ and integrating the results over $[l^*, 2l^*]$ and using fact \qq{2.8}, we have that, by the direct calculation,
 \bess
d_2\lm_1(l^*)\int_{l^*}^{2l^*}\!\phi_2\psi\dx
 &>&d_2\big[\phi_2(2l^*)\psi'(2l^*)-\phi_2(l^*)\psi'(l^*)\big]
 +d_2\lm_1(l^*)\int_{l^*}^{2l^*}\!\phi_2\psi\dx\\
 &=&-c\int_{l^*}^{2l^*}\!\phi_2\psi\dx
 +b\int_{l^*}^{2l^*}\!\rho\phi_3\psi\dx\\
 &>&-c\int_{l^*}^{2l^*}\!\phi_2\psi\dx
 +b\kk(\beta-\ep\rr)\int_{l^*}^{2l^*}\!\phi_3\psi\dx,\\
 d_3\lm_1(l^*)\int_{l^*}^{2l^*}\!\phi_3\psi\dx&>&k\int_{l^*}^{2l^*}\!\phi_2\psi\dx
 -q\int_{l^*}^{2l^*}\!\phi_3\psi\dx.
  \eess
Thanks to $\int_{l^*}^{2l^*}\!\phi_i\psi\dx>0$ for $i=2,3$. It is not hard to derive from the above inequalities that $bk\kk(\beta-\ep\rr)<(c+d_2\lm_1(l^*))(q+d_3\lm_1(l^*))$. This contradicts to \qq{2.6}.

{\it Step 2}. Next we show that the eigenvalue problem
   \bes
 G'(0,0)\phi=\mu\phi,\;\;\phi\in\overline{W}_{(0,0)}\setminus S_{(0,0)}
 =W\setminus\{(0,0)\}
 \lbl{2.13}
 \ees
has an eigenvalue $\mu>1$. In fact, according to \cite[Theorem 1]{Hess83}, the eigenvalue problem
 \bess\begin{cases}
 -d_2\phi_2''+M\phi_2=r[(M-c)\phi_2+b \rho(x)\phi_3],\; &0<x<l,\\
 -d_3\phi_3''+M\phi_3=r[k\phi_2+(M-q)\phi_3],\; &0<x<l,\\
 \phi_i(0)=\phi_i(l)=0, \;\;i=2,3
 \end{cases}\eess
has a positive principle eigenvalue $r$ and with positive eigenfunction $(\phi_2, \phi_3)$, i.e., $\phi_2, \phi_3>0$. By the results of Step 1 we see that $r\not=1$. In the following we shall show that $r<1$.

Assume on the contrary that $r>1$. Since $M>c$ and $M>q$, it follows that
 \bess\begin{cases}
 -d_2\phi_2'=[r(M-c)-M]\phi_2+rb \rho(x)\phi_3]>-c\phi_2+b \rho(x)\phi_3,\; &0<x<l,\\
 -d_3\phi_3''=rk\phi_2+[r(M-q)-M]\phi_3]>k\phi_2-q\phi_3,\; &0<x<l,\\
 \phi_i(0)=\phi_i(l)=0, \;\;i=2,3.
 \end{cases}\eess
Similar to Step 1 we can derive a contradiction. Thus $r<1$.
Obviously, $\mu=1/r>1$ is an eigenvalue of \qq{2.13} since $(\phi_2, \phi_3)\in\overline{W}_{(0,0)}\setminus S_{(0,0)}$.

{\it Step 3}. The existence. Take advantage of \cite[Corollry 3.1]{WYcnsns24}, ${\rm index}_W(G,(0,0))=0$. It is followed by the theory of topological degree in cones that the operator $G$ has a fixed point within $\mathcal{O}$ that is different from $(0,0)$. Hence, the problem \qq{2.5} has a solution in $\mathcal{O}$ that is different from $(0,0)$. It is clear that such a solution is positive. So \qq{2.5} has least one positive solution.

{\it Step 4}. The uniqueness. Suppose that $(U_{2l}, U_{3l})$ is a positive solution of \qq{2.5}. By use of the maximum principle we can get
 \[U_{2l}<b\beta_m/c,\;\; U_{3l}<bk\beta_m/(cq)\;\;\;{\rm in}\;\;(0, l),\]
where $\beta_m=\sup_{[0,\yy)}\rho(x)$. Take $\ol U_{2l}=b\beta_m/c$ and $\ol U_{3l}=bk\beta_m/(cq)$. Then $(\ol U_{2l}, \ol U_{3l})\geq(U_{2l}, U_{3l})$ and $(\ol U_{2l}, \ol U_{3l})$ is an upper solution of \qq{2.5}. By the monotone iterative method, the problem \qq{2.5} has a positive solution $(\hat U_{2l},\hat U_{3l})$ and $(\hat U_{2l},\hat U_{3l})\geq(U_{2l}, U_{3l})$.

In view of $U_{2l}'(0), U_{3l}'(0)>0$ and $U_{2l}'(l), U_{3l}'(l)<0$, we can find $\xi>1$ such that $\xi(U_{2l}, U_{3l})\ge (\hat U_{2l}, \hat U_{3l})$ in $[0,l]$. Set
\[\ud\xi=\inf\{\xi\ge1: \xi(U_{2l}, U_{3l})\ge(\hat U_{2l}, \hat U_{3l}) {\rm ~ in  ~ }[0,l]\}.\]
It is clear that $\ud\xi$ is well defined, $\ud\xi\ge1$ and $\ud\xi(U_{2l}, U_{3l})\ge(\hat U_{2l}, \hat U_{3l})$ in $[0,l]$.

We shall prove $\ud\xi=1$. Assume $\ud\xi>1$. Let $\phi(x)=\ud\xi U_{2l}-\hat U_{2l}$ and $\psi(x)=\ud\xi U_{3l}-\hat U_{3l}$. In then follows that, by the carefully calculations,
 \bess
-d_2\phi''+c\phi&=&\frac{b \rho(x)\psi}{(1+U_{3l})(1+\hat U_{3l})}+\frac{b(\ud\xi-1) \rho(x)U_{3l}\hat U_{3l}}{(1+U_{3l})(1+\hat U_{3l})}, ~ ~\, x\in[0,l],\\[2mm]
-d_3\psi''+q\psi&=&\frac{k\phi}{1+U_{3l}}+\frac{k\hat U_{2l}(\hat U_{3l}-U_{3l})}{(1+U_{3l})(1+\hat U_{3l})}, \hspace{21mm} x\in[0,l],\\[1.5mm]
\phi(0)=\psi(0)&=&\phi(l)=\psi(l)=0.
 \eess
As $\hat U_{3l}-U_{3l}\ge 0$, $(\ud\xi-1)\rho U_{3l}\hat U_{3l}\ge,\,\not\equiv 0$ and $\phi, \psi\ge 0$, we have that $\phi, \psi> 0$ in $(0,l)$ by the maximum principle, and $\phi'(0), \psi'(0)>0$ and $\phi'(l),\psi'(l)<0$ by the Hopf boundary lemma. Thus, we can find $\ep>0$ such that $(\phi, \psi)\ge\ep(U_{2l}, U_{3l})$, i.e., $(\ud\xi-\ep)(U_{2l}, U_{3l})\ge(\hat U_{2l}, \hat U_{3l})$ in $[0,l]$. This is contradicts to the definition of $\ud\xi$. Then $\ud\xi=1$, i.e., $(\hat U_{2l}, \hat U_{3l})=(U_{2l}, U_{3l})$. The proof is complete. \end{proof}

\begin{lem}\lbl{l2.2} Let $(U_{2l},  U_{3l})$ be the unique positive solution of \qq{2.5}. Then
 \bes
 \lim_{l\to\yy}(U_{2l}, U_{3l})=(U_2, U_3)\;\;\;{\rm in}\;\;
 [C^2_{\rm loc}([0,\yy))]^2,
 \lbl{2.14}\ees
and $(U_2, U_3)$ is the unique bounded positive solution of \qq{2.4}.
\end{lem}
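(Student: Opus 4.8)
The goal of Lemma \ref{l2.2} is to pass to the limit $l\to\yy$ in the family of positive solutions $(U_{2l}, U_{3l})$ of \qq{2.5} and to identify the limit as the unique bounded positive solution of \qq{2.4}. The proof has three distinct parts: (i) compactness to extract a convergent subsequence whose limit solves \qq{2.4}; (ii) a uniform positive lower bound guaranteeing the limit is genuinely positive (not the trivial solution); and (iii) uniqueness of the bounded positive solution, which upgrades subsequential convergence to full convergence of the whole family. I expect the main obstacle to be part (ii), securing the \emph{locally uniform} positive lower bound, since the limit could a priori collapse to $(0,0)$.

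\medskip

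\noindent\textbf{Compactness and the limit equation.} First I would fix any bounded interval $[0, R]$ and use the uniform $L^\infty$ bounds already established (namely $U_{2l}<b\beta_m/c$ and $U_{3l}<bk\beta_m/(cq)$, valid for all $l$) together with interior-up-to-the-left-boundary elliptic estimates. Since $g_2, g_3$ and $\rho$ are bounded on $[0,R]$, the standard $L^p$ and Schauder estimates give a uniform $C^{2+\alpha}([0, R])$ bound on $(U_{2l}, U_{3l})$ independent of $l$ (for $l$ large). A diagonal argument over $R = 1, 2, 3, \ldots$ then extracts a subsequence $l_n\to\yy$ with $(U_{2l_n}, U_{3l_n})\to (U_2, U_3)$ in $[C^2_{\rm loc}([0,\yy))]^2$. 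Passing to the limit in \qq{2.5} termwise, and using that the right boundary $x=l_n$ escapes to infinity so only the conditions at $x=0$ survive, shows $(U_2, U_3)$ is a nonnegative bounded solution of \qq{2.4}.

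\medskip

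\noindent\textbf{Positive lower bound.} The delicate step is ruling out $(U_2, U_3)\equiv(0,0)$. Here I would exploit the eigenvalue machinery from Lemma \ref{l2.1}: the condition $bk(\beta-\ep)>(c+d_2\lm_1(l))(q+d_3\lm_1(l))$ from \qq{2.6}, valid for all $l\ge l^*$, means the linearization at zero is unstable on a \emph{fixed} window $[l^*, 2l^*]$ independently of the large $l$. Concretely, using the principal eigenfunction $\psi$ of \qq{2.7} on $[l^*, 2l^*]$ and testing the equations for $(U_{2l}, U_{3l})$ against $\psi$ as in Step 1 of Lemma \ref{l2.1}, one obtains a lower bound on $\int_{l^*}^{2l^*}(U_{2l}+U_{3l})\psi\,\dx$ that does not degenerate as $l\to\yy$; combined with the Harnack inequality this forces $(U_2, U_3)$ to be bounded below by a positive constant on $[l^*, 2l^*]$, hence $(U_2, U_3)\not\equiv(0,0)$. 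The strong maximum principle then gives $U_2, U_3>0$ on $(0,\yy)$.

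\medskip

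\noindent\textbf{Uniqueness and full convergence.} Finally I would prove uniqueness of bounded positive solutions of \qq{2.4} by a sliding/comparison argument of exactly the type used in Step 4 of Lemma \ref{l2.1}: given two bounded positive solutions, form the linear combinations $\phi=\ud\xi U_2-\tilde U_2$, $\psi=\ud\xi U_3-\tilde U_3$ with $\ud\xi$ the infimal scaling factor, derive the cooperative system they satisfy, and use the maximum principle together with the boundary behavior at $x=0$ and the nondegeneracy at infinity (from \qq{2.9}) to force $\ud\xi=1$. Since the limit $(U_2, U_3)$ is thereby the unique bounded positive solution, every subsequence of $(U_{2l}, U_{3l})$ has a further subsequence converging to this same limit, so the whole family converges, establishing \qq{2.14}.
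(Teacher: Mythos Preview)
Your outline has the right three-part architecture, but there is a real gap in the interplay between parts (ii) and (iii), and you also miss a simplification that the paper exploits.

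First, the paper observes that $(U_{2l}, U_{3l})$ is \emph{monotone increasing in $l$} by the comparison principle. This immediately gives full convergence (no subsequences needed) and makes positivity of the limit automatic: the limit dominates each fixed $(U_{2l_0}, U_{3l_0})$, which is strictly positive on $(0,l_0)$. Your eigenfunction/Harnack argument in part (ii) is therefore working much harder than necessary to obtain something weaker.

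The genuine gap is this: for the $\ud\xi$-sliding uniqueness argument on the half-line, positivity of $(U_2,U_3)$ on $(0,\infty)$ is \emph{not enough}. You need a uniform positive lower bound as $x\to\infty$, i.e.\ the existence of $\tau>0$ with $U_2(x),U_3(x)\ge\tau$ for all $x\ge 1$. Without this you cannot even choose a finite $\xi$ with $\xi(U_2,U_3)\ge(\hat U_2,\hat U_3)$, nor can you close the final step $\phi,\psi\ge\ep(U_2,U_3)$. Your part (ii) only yields a lower bound on the fixed compact window $[l^*,2l^*]$, and your part (iii) then appeals to ``the nondegeneracy at infinity (from \qq{2.9})''---but \qq{2.9} is proved in Lemma \ref{l2.3} under the \emph{additional} hypothesis $\rho'\ge,\not\equiv 0$, so invoking it here is circular. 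The paper fills this gap with a separate and fairly delicate argument (its Step 2): three claims that successively rule out $\lim_{x\to\infty}U_3=0$, $\lim_{x\to\infty}U_2=0$, and $\liminf_{x\to\infty}U_i=0$, by analysing the ODE structure at large $x$ and exploiting $bk\beta>cq$. That asymptotic lower bound is the key missing idea in your proposal.
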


\begin{proof} {\it Step 1}. It is clear that $(U_{2l},  U_{3l})$ is increasing in $l$ by the comparison principle. Using classical elliptic regularity theory (uniformly estimate) and a diagonal procedure we can show that the limit \qq{2.14} exists, and $(U_2, U_3)$ is a bounded positive solution of \qq{2.4}.

{\it Step 2}. Prove that there exists $\tau>0$ such that
 \bes
 U_2(x), \;\;U_3(x)\ge\tau,\;\;\forall\; x\ge 1.
 \lbl{2.15}\ees

{\bf Claim 1}: $\lim_{x\to\yy}U_3(x)=0$ is impossible.\vskip 2pt

As $bk\beta/(cq)>1$, there exists $\ep>0$ such that
$\frac{bk(1-\ep)(\beta-\ep)}{c(1+\ep)^2}-q>0$. If $\lim_{x\to\yy}U_3(x)=0$, then there exists $x_0\gg1$ such that $\rho(x)\ge \beta-\ep$ and $U_3(x)<\ep$ for all $x\ge x_0$. Set
 \[d=\frac{k(1-\ep)}{c(1+\ep)},\;\;\;{\rm and}\;\; Z(x)=d_2dU_2(x)+d_3U_3(x).\]
Then $w$ satisfies, for some constant $\sigma>0$,
 \bess
 -Z''&=&\frac{db\rho(x)U_3}{1+U_3}-dc U_2+\frac{kU_2}{1+U_3}-qU_3\nonumber\\[1mm]
 &\ge&\frac{db(\beta-\ep)U_3}{1+\ep}-dc U_2+\frac{kU_2}{1+\ep}-qU_3\nonumber\\[1mm]
 &=&\frac{k\ep}{1+\ep}U_2+\kk(\frac{bk(1-\ep)(\beta-\ep)}
 {c(1+\ep)^2}-q\rr)U_3\nonumber\\[1mm]
 &\ge&\sigma Z,\;\;\;x>x_0.
 \eess
Noticing that $Z(x)>0$. It follows from above that $Z'(x)>0$ for $x>x_0$, and so $Z(x)\ge\tau_1>0$. As $\lim_{x\to\yy}U_3(x)=0$, it yields $U_2(x)\ge\tau_2>0$ and there exists $x_1\gg1$ such that
 \[-d_3U_3''=\dd\frac{kU_2}{1+U_3}-qU_3\ge\tau>0,\;\;\;x\ge x_1.\]
This is impossible since $U_3(x)>0$ for all $x>0$.

{\bf Claim 2}: $\lim_{x\to\yy}U_2(x)=0$ is impossible.\vspace{1mm}

If $\lim_{x\to\yy}U_2(x)=0$, it is easy to see from the first equation of \qq{2.4} that $U_3$ has no positive lower bound since $\lim_{x\to\yy}\rho(x)=\beta>0$. On the other hand, noticing that $\lim_{x\to\yy}U_3(x)=0$ is not true, there exist $\sigma>0$ and $x_n\to\yy$ such that $U_3$ reaches a local maximum at $x_n$ and $U_3(x_n)\ge\sigma$. Thus we have
 \[kU_2(x_n)\ge qU_3(x_n)(1+U_3(x_n))\ge\sigma(1+\sigma),\]
which is impossible as $\lim_{n\to\yy}U_2(x_n)=0$. \vskip 2pt

{\bf Claim 3}: Both $\liminf_{x\to\yy}U_3(x)=0$ and $\liminf_{x\to\yy}U_2(x)=0$ \vspace{1mm}are impossible.

If $\liminf_{x\to\yy}U_3(x)=0$, then, by {\bf Claim 1}, there exist $x_{n+2}>x_{n+1}>x_n\to\yy$ such that $U_3$ reaches local maximums  at $x_n$ and $x_{n+2}$, and local minimum at $x_{n+1}$; $U_3$ is decreasing in $(x_n, x_{n+1})$ and increasing in $(x_{n+1}, x_{n+2})$; and $\lim_{n\to\yy}U_3(x_{n+1})=0$. Thus,
 \bes
 qU_3(x_n)(1+U_3(x_n))\le kU_2(x_n),\;\;\;kU_2(x_{n+1})\le qU_3(x_{n+1})(1+U_3(x_{n+1}))\to 0,
 \lbl{2.17}\ees
which implies $\lim_{n\to\yy}U_2(x_{n+1})=0$.

We first show that $U_2$ cannot be monotonically increasing in the left neighborhood of $x_{n+1}$. If this is not true, as $\lim_{n\to\yy}U_2(x_{n+1})=0$, we can find the first  $\ud x_n<x_{n+1}$ such that $U_2$ reaches a local minimum at $\ud x_n$ and $U_2$ is increasing in $(\ud x_n, x_{n+1})$. Then
$\frac{b\rho(\ud x_n)U_3(\ud x_n)}{1+U_3(\ud x_n)}\le cU_2(\ud x_n)$,
and, by second inequality of \qq{2.17},
 \bes
 \frac{b\rho(\ud x_n)U_3(\ud x_n)}{1+U_3(\ud x_n)}\le cU_2(\ud x_n)
 \le cU_2(x_{n+1})\le\frac {cq}kU_3(x_{n+1})(1+U_3(x_{n+1})),
 \lbl{2.19a}\ees
which implies $\lim_{n\to\yy}U_3(\ud x_n)=0$ as $\lim_{n\to\yy}U_3(x_{n+1})=0$. Recall that $\lim_{n\to\yy}\rho(\ud x_n)=\beta$ and ${bk\beta}/(cq)>1$. It can be  derived from \qq{2.19a} that $U_3(\ud x_n)<U_3(x_{n+1})$ when $n$ is large enough. Noticing that $U_3$ is decreasing in $(x_n, x_{n+1})$. It follows that $\ud x_n<x_n$, and then $U_2$ is increasing in $(x_n, x_{n+1})$. Making use of \qq{2.17} we have
 \bess
 qU_3(x_n)(1+U_3(x_n))\le kU_2(x_n)<kU_2(x_{n+1})\le qU_3(x_{n+1})(1+U_3(x_{n+1})),
 \eess
which implies $U_3(x_n)<U_3(x_{n+1})$. This is impossible.

Similarly, we can show that $U_2$ cannot be monotonically decreasing in the right neighborhood of $x_{n+1}$.

The only possibility is that $U_2'(x_{n+1})=0$ and $U_2''(x_{n+1})\ge 0$. Then $\frac{b\rho(x_{n+1})U_3(x_{n+1})}{1+U_3(x_{n+1})}\le cU_2(x_{n+1})$,
and, by the second inequality of \qq{2.17},
 \bess
\frac{b\rho(x_{n+1})U_3(x_{n+1})}{1+U_3(x_{n+1})}\le cU_2(x_{n+1})\le\frac {cq}kU_3(x_{n+1})(1+U_3(x_{n+1})).
 \eess
Thus we have $bk\rho(x_{n+1})\le cq(1+U_3(x_{n+1}))^2$.
This is impossible since $\lim_{n\to\yy}\rho(x_{n+1})=\beta$, $\lim_{n\to\yy}U_3(x_{n+1})=0$ and $bk\beta>cq$.

Similarly, we can show that $\liminf_{x\to\yy}U_2(x)=0$ is impossible. {\bf Claim 3} is proved.

Summarizing, we have proved that \qq{2.15} is true.

{\it Step 3}. Prove the uniqueness. The proof is similar to Step 4 in the proof of Lemma \ref{l2.1}. For completeness and convenience of readers, we provide details.
It is clear that $U_{2l}'(0),  U_{3l}'(0)>0$ by the Hopf boundary lemma, and $(U_{2l},  U_{3l})$ is increasing in $l$ by the comparison principle. Consequently, $U_2'(0), U_3'(0)>0$.

Let $(\hat U_2,  \hat U_3)$ be a bounded positive solution of \qq{2.4}. Then $(\hat U_2,  \hat U_3)>(U_{2l},  U_{3l})$ in $(0,l)$ by the comparison principle, and so $(\hat U_2,  \hat U_3)\ge (U_2, U_3)$ in $(0, \yy)$. In view of $U_2'(0), U_3'(0)>0$ and \qq{2.15}, we can find $\xi>1$ such that $\xi(U_2, U_3)\ge (\hat U_2, \hat U_3)$ in $[0,\yy)$. Set
\[\ud\xi=\inf\{\xi\ge1: \xi(U_2, U_3)\ge(\hat U_2, \hat U_3) {\rm ~ in  ~ }[0,\yy)\}.\]
It is clear that $\ud\xi$ is well defined, $\ud\xi\ge1$ and $\ud\xi(U_2, U_3)\ge(\hat U_2, \hat U_3)$ in $[0,\yy)$.

We shall prove $\ud\xi=1$. Assume $\ud\xi>1$. Let $\phi(x)=\ud\xi U_2-\hat U_2$ and $\psi(x)=\ud\xi U_3-\hat U_3$. It then follows that, by the carefully calculations,
 \bes
-d_2\phi''+c\phi&=&\frac{b\rho(x)\psi}{(1+U_3)(1+\hat U_3)}+\frac{b(\ud\xi-1)\rho(x)\hat U_3U_3}{(1+U_3)(1+\hat U_3)}, ~ ~\, x\in[0,\yy),\lbl{2.18}\\[1mm]
-d_3\psi''+q\psi&=&\frac{k\phi}{1+U_3}+\frac{kv^*(\hat U_3-U_3)}{(1+U_3)(1+\hat U_3)}, \hspace{21mm} x\in[0,\yy).\nonumber
 \ees
As $\hat U_3-U_3\ge 0$, $(\ud\xi-1)\rho \hat U_3U_3\ge,\,\not\equiv 0$ and $\phi, \psi\ge 0$, we have $\phi, \psi> 0$ in $[0,\yy)$. Thus, by \qq{2.18},
 \bes
 -d_2\phi''+c\phi\ge\frac{b(\ud\xi-1)\rho(x)U_3\hat U_3}{(1+U_3)(1+\hat U_3)},\;\;x\in[0,\yy).
 \lbl{2.19}\ees
In view of \qq{2.15}, we have $(\hat U_2,  \hat U_3)\ge(U_2, U_3)\ge(\tau, \tau)$ for $x\ge 1$. Noticing that $\rho(x)\ge 0$ and $\lim_{x\to\yy}\rho(x)=\beta$. It follows from \qq{2.19} that there exists $\delta>0$ such that $\liminf_{x\to\yy}\phi(x)\ge\delta$. Then, by use of \qq{2.18}, there is $\sigma>0$ such that $\liminf_{x\to\yy}\psi(x)\ge\sigma$. So, we can find $\ep>0$ such that $(\phi, \psi)\ge\ep(U_2, U_3)$ in $[0,\yy)$. This contradicts to the definition of $\ud\xi$. Hence, $\ud\xi=1$, i.e., $(\hat U_2, \hat U_3)=(U_2, U_3)$. The proof is complete.
\end{proof}

\begin{lem}\lbl{l2.3} If $\rho'(x)\ge,\,\not\equiv 0$ in $[0,\yy)$, then the unique bounded positive solution $(U_2,  U_3)$ of \qq{2.4} satisfies $U_2'(x)>0,  U_3'(x)>0$ in $[0,\yy)$ and \qq{2.9} holds.
\end{lem}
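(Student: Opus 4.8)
The statement asks for two conclusions: strict monotonicity $U_2', U_3'>0$ on $[0,\yy)$, and the explicit limit \qq{2.9}. Before anything I would record two structural facts. First, \qq{2.4} is \emph{cooperative}: $\pl g_2/\pl U_3=b\rho(x)/(1+U_3)^2>0$ and $\pl g_3/\pl U_2=k/(1+U_3)>0$, so comparison principles and the maximum principle for the linearized system are available. Second, the target pair in \qq{2.9} is exactly the unique positive constant solution of the limiting algebraic system $g_2(\beta,U_2,U_3)=g_3(U_2,U_3)=0$: setting the right-hand sides to zero gives $(1+U_3)^2=bk\beta/(cq)$ and $cU_2=b\beta U_3/(1+U_3)$, whence $U_3=\sqrt{bk\beta/(cq)}-1$ and $U_2=b\beta(1-\sqrt{cq/(bk\beta)})/c$, both positive since $bk\beta>cq$.

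For the monotonicity I would follow the route announced in the text and work first on bounded intervals. For each large $l$ I consider the auxiliary problem
\bess\begin{cases}
-d_2 U_2''=g_2(x,U_2,U_3), & 0<x<l,\\
-d_3 U_3''=g_3(U_2,U_3), & 0<x<l,\\
U_2(0)=U_3(0)=0,\ \ U_2'(l)=U_3'(l)=0,
\end{cases}\eess
i.e. Dirichlet at $x=0$ and zero flux at $x=l$; its positive solution $(\tilde U_{2l},\tilde U_{3l})$ exists and is unique by the same degree/monotone-iteration and scaling arguments as in Lemma \ref{l2.1}. The point of the Neumann end is that such solutions can be monotone. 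To see this I set $V_i=\tilde U_{il}'$ and differentiate, obtaining the linear cooperative system
\bess\begin{cases}
-d_2 V_2''+cV_2-\df{b\rho}{(1+\tilde U_{3l})^2}V_3=\df{b\rho'(x)\tilde U_{3l}}{1+\tilde U_{3l}}, & 0<x<l,\\
-d_3 V_3''+\kk(q+\df{k\tilde U_{2l}}{(1+\tilde U_{3l})^2}\rr)V_3-\df{k}{1+\tilde U_{3l}}V_2=0, & 0<x<l,
\end{cases}\eess
with $V_i(l)=0$ and $V_i(0)=\tilde U_{il}'(0)>0$ by the Hopf lemma. The source term is $\ge 0$ because $\rho'\ge 0$ and $\tilde U_{3l}>0$, and it is $\not\equiv 0$ because $\rho'\not\equiv 0$. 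The maximum principle for the cooperative system then gives $V_2,V_3\ge 0$, and the strong maximum principle upgrades this to $V_2,V_3>0$ in $(0,l)$. Finally I let $l\to\yy$: uniform $C^2_{\rm loc}$ bounds and a diagonal extraction produce a limit that is a bounded positive solution of \qq{2.4} (its positivity follows by comparing on each $(0,l')$ with the Dirichlet solution $(U_{2l'},U_{3l'})$ of \qq{2.5}, forcing the limit $\ge(U_2,U_3)>0$), hence equal to $(U_2,U_3)$ by the uniqueness in Lemma \ref{l2.2}. The limit inherits $U_2',U_3'\ge 0$, and one last application of the strong maximum principle to the differentiated system on $[0,\yy)$ yields the strict inequalities.

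Granting the monotonicity, the limit \qq{2.9} is comparatively routine. Since $U_2,U_3$ are increasing and bounded, $U_i(\yy):=\lim_{x\to\yy}U_i(x)$ exist and are positive by \qq{2.15}. To identify them I translate: put $U_i^n(x)=U_i(x+n)$ on $[-1,1]$; uniform elliptic estimates give $C^{2+\alpha}$ bounds, so along a subsequence $U_i^n\to\hat U_i$ in $C^2_{\rm loc}$, and monotonicity with a finite limit forces $\hat U_i\equiv U_i(\yy)$, a constant. Passing to the limit in \qq{2.4} (using $\rho(x+n)\to\beta$ and $\hat U_i''\equiv 0$) shows that $(U_2(\yy),U_3(\yy))$ solves $g_2(\beta,\cdot,\cdot)=g_3(\cdot,\cdot)=0$; being positive it must be the unique positive equilibrium computed above, which is exactly \qq{2.9}. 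Uniqueness of that equilibrium makes the limit independent of the subsequence, so the full limit holds.

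The main obstacle is the monotonicity step, and within it the positivity of $(V_2,V_3)$. Cooperative systems do not satisfy the maximum principle unconditionally; one needs the principal eigenvalue of the linearized operator, with the mixed Dirichlet--Neumann conditions, to be positive. I expect this to follow from the same spectral inequality \qq{2.6} that underlies Lemmas \ref{l2.1}--\ref{l2.2}, but checking it carefully---together with ensuring that strict positivity survives the passage $l\to\yy$ and that the auxiliary Neumann problem is genuinely solvable with limit the \emph{same} $(U_2,U_3)$---is where the real work lies. The limit \qq{2.9}, by contrast, reduces to a standard translation-compactness argument once \qq{2.15} and the monotonicity are available.
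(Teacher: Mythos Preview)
Your approach is sound but differs substantially from the paper's. For monotonicity you use a Dirichlet--Neumann auxiliary problem on $(0,l)$, differentiate, and invoke the maximum principle for the resulting cooperative linear system in $(V_2,V_3)$. The paper instead imposes large-constant Dirichlet data $U_i(l)=\zeta$ (with $\zeta$ big enough that $(\zeta,\zeta)$ is a supersolution) and constructs an \emph{increasing} solution directly by a Schauder fixed point in a closed convex set of nondecreasing $C^1$ functions: given an increasing $U_3$, the scalar equation for $U_2$ has increasing source $b\rho U_3/(1+U_3)$, hence an increasing solution; feeding this back, the resulting $\hat U_3$ is again increasing. After $l\to\infty$ and the uniqueness of Lemma~\ref{l2.2}, this yields $U_2',U_3'\ge 0$. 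For the \emph{strict} inequality the paper then decouples: with $U_3'\ge 0$ already known, $p(x):=b\rho(x)U_3/(1+U_3)$ satisfies $p'\ge,\not\equiv 0$, and $V=U_2'$ solves the \emph{scalar} problem $-d_2V''+cV=p'$ with $V(0)>0$, $V(l)\ge 0$, so $V>0$ by the ordinary maximum principle; similarly for $U_3'$. The upshot is that the paper never needs a maximum principle for a coupled system, so the spectral obstacle you flag simply does not arise. Your route can be completed too: by sublinearity, the solution $(\tilde U_{2l},\tilde U_{3l})$ is itself a strict positive supersolution of the linearized operator (one computes $\mathcal L_2(\tilde U_{2l},\tilde U_{3l})=b\rho\,\tilde U_{3l}^2/(1+\tilde U_{3l})^2>0$ and $\mathcal L_3(\tilde U_{2l},\tilde U_{3l})=k\tilde U_{2l}\tilde U_{3l}/(1+\tilde U_{3l})^2>0$), which furnishes the barrier you need without appealing to \qq{2.6}. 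Your treatment of \qq{2.9} via translation--compactness is more explicit than the paper's, which simply records that monotonicity plus boundedness guarantees the limit.
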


\begin{proof} {\it Step 1}. Take $\zeta>1$ large enough such that
 \bes
 \frac{b\beta}{c(1+\zeta)}<1, \;\;\;\frac{k}{q(1+\zeta)}<1.
 \lbl{2.20}\ees

Let $d, \gamma>0$ be constants and $z\in C([0,l])$ is increasing and $|z|\le M\zeta$ for some positive constant $M$. Let $0\le u\le\zeta$ be the unique solution of
 \bess\bbb
 -du''+\gamma u=z(x),\;\;\;0<x<l,\\
 u(0)=0,\;\;u(l)=\zeta.
 \nnn\eess
Making use of the $L^p$ theory we have that there exists a constant $C(d,\gamma, M)>0$ such that $\|u\|_{C^{1+\alpha}([0,l])}\le C(d,\gamma, M)$. We define
 \bess
 {\sum}_2&=&\{U_2\in C^1([0,l]):\, \|U_2\|_{C^1([0,l])}\le C(d_2,c, b\beta),\; 0\le U_2\le\zeta,\;U_2'\ge 0\},\\
 {\sum}_3&=&\{U_3\in C^1([0,l]):\, \|U_3\|_{C^1([0,l])}\le C(d_3,q, k),\; 0\le U_3\le\zeta,\;U_3'\ge 0\}.
 \eess
Then ${\sum}_3$ is a bounded and closed convex set of $C^1([0,l])$.

For the given $U_3\in{\sum}_3$. Let $U_2$ be the unique positive solution of \bess\begin{cases}
-d_2U_2''+cU_2=\dd\frac{b\rho(x)U_3(x)}{1+U_3(x)}, \;\; 0<x<l,\\[2mm]
U_2(0)=0,\;\;U_2(l)=\zeta.
  \end{cases}\eess
If $U_2$ takes a local maximum at $x_0\in(0,l)$, then
 \[U_2(x_0)\le\frac{b\rho(x_0)U_3(x_0)}{c(1+U_3(x_0))}< \frac{b\beta\zeta}{c(1+\zeta)}<\zeta\]
by the first inequality of \qq{2.20}. Consequently, $U_2(x)\le\zeta$ in $[0,l]$. Moreover, it is easy to see that the function $z(x)=\frac{b\rho(x)U_3(x)}{1+U_3(x)}$ is increasing in $x$ and $0\le z<b\beta\zeta$. Thus $U_2(x)$ is increasing and $\|U_2\|_{C^{1+\alpha}([0,l])}\le C(d_2,c,b\beta)$. This indicates that $U_2\in{\sum}_2$.

For such a $U_2$, let $\hat U_3$ be the unique positive solution of \bess\begin{cases}
-d_3\hat U_3''+q\hat U_3=\dd\frac{k U_2(x)}{1+\hat U_3(x)}, \;\; 0<x<l,\\[2mm]
\hat U_3(0)=0,\;\;\hat U_3(l)=\zeta.
  \end{cases}\eess
If $\hat U_3$ takes a local maximum at $x_0\in(0,l)$ and $\hat U_3(x_0)\ge\zeta$, then
 \[q\zeta(1+\zeta)\leq\hat U_3(x_0)(1+\hat U_3(x_0))\le kU_2(x_0)\le k\zeta.\]
This contradicts the second inequality of \qq{2.20}. Thus, $\hat U_3\le\zeta$, and so $\|\hat U_3\|_{C^{1+\alpha}([0,l])}\le C(d_3,q,k)$. As $U_2(x)$ is increasing, we have that $\hat U_3(x)$ is increasing. Therefore, $\hat U_3(x)\in{\sum}_3$.

Define ${\mathscr F}(U_3)=\hat U_3$. The above arguments show that ${\mathscr F}:\,{\sum}_3\to{\sum}_3$. It is clear that ${\mathscr F}$ is continuous based on the continuous dependence of the solution on parameters. This combines the estimate $\|F(U_3)\|_{C^{1+\alpha}([0,l])}\le C(d_3,q,k)$ indicate that ${\mathscr F}$ is compact. Hence, by the Schauder fixed point theorem, ${\mathscr F}$ has at least one fixed point $U_3\in{\sum}_3$. Thus, the boundary value problem
 \bes\begin{cases}
-d_2U_2''=g_2(x,U_2,U_3),\;\;\;\;0<x<l,\\
-d_3U_3''=g_3(U_2,U_3), \;\,\;\;\;\;\;\;0<x<l,\\
U_2(0)=U_3(0)=0, \;\;U_2(l)=U_3(l)=\zeta
  \end{cases}\label{2.21}\ees
has at least one positive solution $(U_{2l}^\zeta, U_{3l}^\zeta)$, and $U_{2l}^\zeta(x)$ and $U_{3l}^\zeta(x)$ are increasing in $x$.
Obviously, $(U_{2l}^\zeta, U_{3l}^\zeta)\ge(U_{2l},  U_{3l})$ when $l>2l^*$, where $(U_{2l},  U_{3l})$ is the unique positive solution of \qq{2.5}

{\it Step 2}. Let $(U_2, U_3)$ be a positive solution of \qq{2.21}. Then
$U_2, U_3\le\zeta$ by the maximum principle. Thanks to \qq{2.20}. It is easy to see that $(U_2, U_3)=(\zeta, \zeta)$ and $(\ud U_2, \ud U_3)=(0,0)$ are the ordered upper and lower solutions of \qq{2.21}. By the upper and lower solutions method (the monotone iterative method), the problem \qq{2.21} has at least one positive solution $(\hat U_2, \hat U_3)$, and $(\hat U_2, \hat U_3)$ is the maximal solution  of \eqref{2.21} located between $(0,0)$ and $(\zeta, \zeta)$. Thus, $(U_2, U_3)\le(\hat U_2, \hat U_3)$.

Similar to Step 4 in the proof of Lemma \ref{l2.1} we can show that $(U_2, U_3)=(\hat U_2, \hat U_3)$. The uniqueness is obtained.

{\it Step 3}. Using classical elliptic regularity theory (uniformly estimate) and a diagonal procedure we can show that there exists a subsequence of $\{(U_{2l}^\zeta, U_{3l}^\zeta)\}$, denoted by it self, and $U^*_2, U^*_3\in C^2([0,\yy))$ such that $(U_{2l}^\zeta, U_{3l}^\zeta)\to (U^*_2, U^*_3)$ in $[C^2_{\rm loc}([0,\yy))]^2$ as $l\to\yy$, and $(U^*_2, U^*_3)$ is a bounded nonnegative solution of \qq{2.4}. Moreover, $U^*_2$ and $U^*_3$ are increasing in $x\in[0,\yy)$ as $U_{2l}^\zeta(x)$ and $U_{3l}^\zeta(x)$ are increasing in $x\in[0,l]$, and $(U^*_2, U^*_3)\ge (U_2, U_3)$ since $(U_{2l}^\zeta, U_{3l}^\zeta)\ge(U_{2l},  U_{3l})$. Hence, $(U^*_2, U^*_3)$ is a bounded positive solution of \qq{2.4}. By the uniqueness of bounded positive solutions of \qq{2.4} (Lemma \ref{l2.2}), $(U^*_2, U^*_3)=(U_2, U_3)$. So, $U_2$ and $U_3$ are increasing in $x\in[0,\yy)$. Therefore, the limit \qq{2.9} exists.

Let $U_2'=V$ and $p(x)=\frac{b\rho(x)U_3}{1+U_3}$. Then $p'(x)\ge,\,\not\equiv 0$ in $[0,\yy)$. There exists $x_0>0$ such that $p'(x)\ge,\,\not\equiv 0$ in $[x_0,\yy)$. For any given $l>x_0$, as $V$ satisfies
 \bess\begin{cases}
 -d_2V''+cV=p'(x)\ge,\,\not\equiv0,&0<x<l,\\
 V(0)>0,\;\; V(l)\ge 0,
 \end{cases}\eess
it derives that $V>0$ in $(0,l)$ by the maximum principle. The arbitrariness of $l>x_0$ shows that $V>0$, i.e., $U_2'>0$ in $(0,\yy)$. By the same way, we can show that $U_3'>0$ in $(0,\yy)$.
\end{proof}
Clearly, Theorem \ref{th2.2} follows from Lemmas \ref{l2.1}-\ref{l2.3}.

\begin{theo}\lbl{th2.3} Let $(U_2, U_3)$ be the unique bounded positive solution of \qq{2.4} obtained in Theorem \ref{th2.2}. Then the problem
 \bes\begin{cases}
-d_1U_1''=f_1(U_1, U_3(x)), \; &0<x<\yy,\\
U_1=0, & x=0
  \label{2.22}\end{cases}\ees
has a unique bounded positive solution $U_1$. Moreover, if $\rho'(x)\ge,\,\not\equiv 0$ in $[0,\yy)$, then either $U_1(x)$ is increasing in $[0,\yy)$, or there exists $x_0>0$ such that $U_1(x)$ is increasing in $(0,x_0)$ and decreasing in $(x_0, \infty)$. Therefore,
  \[\lim\limits_{x\to\infty}U_1(x)=\frac{\theta\sqrt{bk\beta/(cq)}}
  {(a+b)\sqrt{bk\beta/(cq)}-b}.\]
\end{theo}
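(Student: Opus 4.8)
The plan is to exploit the fact that \eqref{2.22} is \emph{linear} in $U_1$. Writing $f_1(U_1,U_3)=\theta-aU_1-\frac{bU_1U_3}{1+U_3}$, the equation becomes
\[
-d_1U_1''+q(x)U_1=\theta,\qquad q(x):=a+\frac{bU_3(x)}{1+U_3(x)},
\]
and since $0\le U_3$ is bounded we have $a\le q(x)\le a+b$ with $q\in C^\alpha_{\rm loc}([0,\yy))$. First I would settle existence and uniqueness of a bounded positive solution. For existence I solve the truncated Dirichlet problems on $(0,l)$ with $U_1(0)=U_1(l)=0$; by the maximum principle each solution $U_{1l}$ is positive, bounded above by $\theta/a$ (the constant $\theta/a$ being a supersolution), and nondecreasing in $l$, so standard interior elliptic estimates and a diagonal extraction produce a bounded positive solution $U_1$ of \eqref{2.22} as $l\to\yy$. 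For uniqueness, the difference $W$ of two bounded positive solutions satisfies $-d_1W''+q(x)W=0$, $W(0)=0$; since $q\ge a>0$, a comparison on $(0,R)$ with the barriers $\pm\big(\ep+\|W\|_\yy e^{\sqrt{a/d_1}\,(x-R)}\big)$, followed by $R\to\yy$ and then $\ep\to0$, forces $W\equiv0$.

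Assume now $\rho'\ge,\not\equiv0$. By Lemma \ref{l2.3}, $U_3'>0$ on $(0,\yy)$ and $U_3\to\sqrt{bk\beta/(cq)}-1$ as in \eqref{2.9}, whence $q'=bU_3'/(1+U_3)^2>0$ on $(0,\yy)$, i.e. $q$ is strictly increasing with limit $q_\yy:=a+b\big(1-\sqrt{cq/(bk\beta)}\,\big)>0$. To identify the limit of $U_1$, I set $P=U_1-\theta/q_\yy$; then $-d_1P''+q_\yy P=(q_\yy-q(x))U_1=:R(x)$, where $R(x)\to0$ because $U_1$ is bounded. The same barrier estimate as above, now with the constant coefficient $q_\yy>0$ and a right-hand side decaying at infinity, gives $\limsup_{x\to\yy}|P|\le\ep$ for every $\ep>0$, hence $U_1\to\theta/q_\yy$; a direct computation shows $\theta/q_\yy$ equals the asserted value $\theta\sqrt{bk\beta/(cq)}\big/\big((a+b)\sqrt{bk\beta/(cq)}-b\big)$.

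It remains to pin down the shape of $U_1$, which is the crux of the argument. Rewriting the equation as
\[
d_1U_1''(x)=q(x)\big(U_1(x)-m(x)\big),\qquad m(x):=\theta/q(x),
\]
shows that $\operatorname{sign}U_1''=\operatorname{sign}(U_1-m)$, where $m$ is strictly decreasing with $m(0)=\theta/a$ and $m(\yy)=\theta/q_\yy=\lim_{x\to\yy}U_1$. The Hopf boundary lemma gives $U_1'(0)>0$, so $U_1$ starts off increasing, and $U_1'$ cannot vanish on an interval (that would force $q$ constant there), so its genuine extrema are isolated. At any interior local maximum $p$ one has $U_1''\le0$, hence $U_1(p)\le m(p)$, and at any interior local minimum $r$ one has $U_1(r)\ge m(r)$. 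Using that $m$ is strictly decreasing I then rule out two configurations: a local minimum $r$ immediately followed by a local maximum $s$ is impossible, since it would give $m(r)\le U_1(r)<U_1(s)\le m(s)<m(r)$; and the pattern ``a maximum, then a minimum $r$, then a monotone increase to the limit'' is impossible, since it forces both $U_1(r)<\lim U_1=m(\yy)$ and $U_1(r)\ge m(r)>m(\yy)$. Because $U_1'(0)>0$ makes the first extremum (if any) a maximum, these exclusions leave only two possibilities: no interior extremum, so $U_1'>0$ and $U_1$ is increasing on $[0,\yy)$; or a single maximum at some $x_0$, so $U_1'>0$ on $(0,x_0)$ and $U_1'<0$ on $(x_0,\yy)$, which is the one-hump profile. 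The main obstacle throughout is exactly this last step: organizing the extremum analysis so that the strictly decreasing nullcline $m$ together with the already-identified limit value conspire to forbid $U_1$ from turning back upward.
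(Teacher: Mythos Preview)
Your argument is correct and, in fact, more complete than what the paper provides: the paper's proof of Theorem~\ref{th2.3} consists of a single sentence referring to \cite[Theorem 2.1]{Wjde14} and omits all details, so there is no in-paper argument to compare against. Your linear reformulation $-d_1U_1''+q(x)U_1=\theta$ with $q(x)=a+bU_3/(1+U_3)$, the approximation by Dirichlet problems on $(0,l)$ with monotone passage to the limit, the exponential-barrier uniqueness and limit identification, and the nullcline/extremum analysis via $d_1U_1''=q(x)(U_1-m(x))$ with $m=\theta/q$ strictly decreasing are all sound; the two exclusion steps (min-then-max and min-then-monotone-increase-to-limit) are exactly what is needed to force the dichotomy, and the degenerate case $U_1'\equiv0$ on an interval is correctly ruled out by the strict monotonicity of $m$. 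One minor cosmetic point: in the limit step for $P=U_1-\theta/q_\yy$ you should make explicit that the barrier is applied on $[X,L]$ with $X$ chosen so that $|R|$ is small on $[X,\yy)$, absorbing the value $P(X)$ into an additional decaying exponential term; your sketch indicates this but the write-up should spell it out.
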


\begin{proof} The proof is similar to that of \cite[Theorem 2.1]{Wjde14},  and the details are omitted here.
\end{proof}

Let $U_1(x)$ be the unique positive solution of \qq{2.22} obtained in Theorem \ref{th2.3}. Taking $\rho(x)=U_1(x)$ in \qq{2.4} and using Theorem \ref{th2.2} we have the following results.

\begin{theo}\lbl{th2.4} If we further assume that $\rho'(x)\ge,\,\not\equiv 0$ in $[0,\yy)$, and
 \bess
 \frac{bk\theta\sqrt{bk\beta/(cq)}}
  {(a+b)\sqrt{bk\beta/(cq)}-b}>cq,
  \eess
then the problem
 \bes\begin{cases}
-d_2U_2''=f_2(U_1(x), U_2, U_3), &0<x<\yy,\\
-d_3U_3''=f_3(U_2, U_3), &0<x<\yy,\\
U_2=U_3=0, & x=0
  \end{cases}\label{2.23}\ees
has a unique bounded positive solution $(U_2, U_3)$. Moreover, if $U_1$ is increasing in $[0,\yy)$, so are $U_2$ and $U_3$. Unfortunately, we can not to obtain the monotonicity of $(U_2, U_3)$ when $U_1(x)$ is increasing in $(0,x_0)$ and decreasing in $(x_0,\infty)$.
\end{theo}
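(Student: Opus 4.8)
The plan is to recognize Theorem~\ref{th2.4} as a direct application of Theorem~\ref{th2.2}, now taken with the coefficient $\rho(x)=U_1(x)$ furnished by Theorem~\ref{th2.3}. First I would check that this $\rho$ meets the standing hypotheses of Theorem~\ref{th2.2}. Because $U_1$ solves \qq{2.22} and is bounded, positive and of class $C^2$, it lies in $C^\alpha_{\rm loc}([0,\yy))$ and is nonnegative, so the only nontrivial requirement is the existence of a positive limit at $\yy$. Under the assumption $\rho'\ge,\,\not\equiv0$, the combination of Theorems~\ref{th2.2} and~\ref{th2.3} gives that $U_3$ is increasing with the explicit limit from \qq{2.9}, and consequently that $U_1$ tends to
\[
\tilde\beta:=\frac{\theta\sqrt{bk\beta/(cq)}}{(a+b)\sqrt{bk\beta/(cq)}-b}>0 .
\]
Hence \qq{2.3} holds for $\rho=U_1$ with the limit $\tilde\beta$ in place of $\beta$, and this is so in both branches of the dichotomy of Theorem~\ref{th2.3}.

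The key verification is the threshold condition, which for the new coefficient reads $bk\tilde\beta>cq$. Multiplying the formula for $\tilde\beta$ by $bk$ shows that $bk\tilde\beta$ equals the left-hand side of the inequality imposed in the statement, so the assumed inequality is nothing but $bk\tilde\beta>cq$. With \qq{2.3} and $bk\tilde\beta>cq$ both satisfied, Theorem~\ref{th2.2} applies verbatim to \qq{2.23}, which is exactly \qq{2.4} with $\rho=U_1$, and delivers the existence and uniqueness of a bounded positive solution $(U_2,U_3)$. Nothing beyond Theorem~\ref{th2.2} is required for this part.

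For the monotonicity I would split along the dichotomy of Theorem~\ref{th2.3}. If $U_1$ is increasing on $[0,\yy)$, then $\rho'=U_1'\ge,\,\not\equiv0$ (it is nonconstant, since $U_1(0)=0<\tilde\beta$), so the second half of Theorem~\ref{th2.2} yields $U_2'>0$ and $U_3'>0$ on $[0,\yy)$, giving the stated monotonicity. I expect the genuine obstacle to be the complementary branch, which is precisely the source of the closing ``Unfortunately'': when $U_1$ increases on $(0,x_0)$ and decreases on $(x_0,\yy)$, the derivative $U_1'$ changes sign, the monotonicity hypothesis $\rho'\ge,\,\not\equiv0$ of Theorem~\ref{th2.2} is violated, and the sliding comparison of Lemma~\ref{l2.3}---which relies on $p(x)=b\rho U_3/(1+U_3)$ being eventually nondecreasing---no longer forces $U_2',U_3'>0$. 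Within the present method I see no way to recover monotonicity in that case, so it must be left open.
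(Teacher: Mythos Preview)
Your proposal is correct and matches the paper's approach exactly: the paper states Theorem~\ref{th2.4} as an immediate consequence of taking $\rho(x)=U_1(x)$ in \qq{2.4} and applying Theorem~\ref{th2.2}, with the displayed inequality being precisely $bk\tilde\beta>cq$ for $\tilde\beta=\lim_{x\to\yy}U_1(x)$ from Theorem~\ref{th2.3}. Your verification of \qq{2.3} for $\rho=U_1$, your identification of the threshold condition, and your treatment of the monotonicity dichotomy are all exactly what the paper intends.
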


Making use of Theorems \ref{th2.2}-\ref{th2.4} we have the following conclusions:
 \begin{enumerate}[$(1)$]
 \item\, Take $\beta=\theta/a$ and $\rho(x)=\ol U_1(x)$. If $\mathcal{R}_0={kb\theta}/(acq)>1$ then the problem
\bes\begin{cases}
-d_2\ol U_2''=f_2(\ol U_1(x), \ol U_2, \ol U_3), &0<x<\yy,\\
-d_3\ol U_3''=f_3(\ol U_2, \ol U_3), &0<x<\yy,\\
\ol U_2=\ol U_3=0, & x=0
  \end{cases}\label{2.24}\ees
has a unique bounded positive solution $(\ol U_2, \ol U_3)$. Moreover, $\ol U_2', \ol U_3'>0$ in $[0,\yy)$ and
 \bess
\lim_{x\to\yy}(\ol U_2, \ol U_3)=
\kk(\frac{b\theta(\sqrt{\mathcal{R}_0}-1)}{ac\sqrt{\mathcal{R}_0}}, \sqrt{\mathcal{R}_0}-1\rr).
 \eess
\item\, The problem
 \bes\begin{cases}
-d_1\ud U_1''=f_1(\ud U_1, \ol U_3(x)), \; &0<x<\yy,\\
\ud U_1=0, & x=0
  \label{2.25}\end{cases}\ees
has a unique bounded positive solution $\ud U_1$. Moreover, either $\ud U_1(x)$ is increasing in $[0,\yy)$, or there exists $x_0>0$ such that $\ud U_1(x)$ is increasing in $(0,x_0)$ and decreasing in $(x_0, \infty)$. Therefore,
  \[\lim\limits_{x\to\infty}\ud U_1(x)=\frac{\theta\sqrt{\mathcal{R}_0}}{(a+b)\sqrt{\mathcal{R}_0}-b}.\]
\item\, Assume further that
 \bes
 \mathcal{R}_0+\sqrt{\mathcal{R}_0}>b/a,
 \lbl{2.26}\ees
then the problem
 \bes\begin{cases}
-d_2\ud U_2''=f_2(\ud U_1(x), \ud U_2, \ud U_3), &0<x<\yy,\\
-d_3\ud U_3''=f_3(\ud U_2, \ud U_3), &0<x<\yy,\\
\ud U_2=\ud U_3=0, & x=0
  \end{cases}\label{2.27}\ees
has a unique bounded positive solution $(\ud U_2, \ud U_3)$. Moreover, if $\ud U_1$ is increasing in $[0,\yy)$, so are $\ud U_2$ and $\ud U_3$. Unfortunately, we can not to obtain the monotonicity $(\ud U_2, \ud U_3)$ when $\ud U_1(x)$ is increasing in $(0,x_0)$ and decreasing in $(x_0,\infty)$.
\end{enumerate}

\subsection{Existence of bounded positive solutions of \qq{2.1}}

Throughout this subsection we always assume that $\mathcal{R}_0={bk\theta }/(acq)>1$, i.e., $bk\theta >acq$. We shall prove the existence of bounded positive solutions of \qq{2.1}. We first use the upper and lower solutions method to deal with this topic under the stronger condition \qq{2.26}, and then use the theory of topological degree in cones to deal with it under the weaker condition $bk\theta>acq$.

\begin{theo}\lbl{th2.5} Under the condition \qq{2.26}, problem \qq{2.1} has at least one bounded positive solution. Moreover, any bounded positive solution $(U_1, U_2, U_3)$ of \qq{2.1} satisfies
 \bes
 (\ol U_1, \ol U_2, \ol U_3)\le(U_1, U_2, U_3)\le (\ud U_1, \ud U_2, \ud U_3), \;\; \forall\; x\ge 0,\lbl{2.28}\ees
where $\ol U_1$, $(\ol U_2, \ol U_3)$, $\ud U_1$ and $(\ud U_2, \ud U_3)$ are the unique bounded positive solutions of \qq{2.2}, \qq{2.24}, \qq{2.25} and \qq{2.27}, respectively.
\end{theo}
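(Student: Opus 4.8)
The plan is to prove \eqref{2.28} by viewing \eqref{2.1} as a mixed-quasimonotone elliptic system on $[0,\yy)$ and applying the coupled upper--lower solution method together with a monotone iteration, taking the two triples furnished by \eqref{2.2}, \eqref{2.24} and by \eqref{2.25}, \eqref{2.27} as the ordered pair of coupled solutions. The structural facts I would record first are the sign relations $\partial_{U_3}f_1<0$, $\partial_{U_1}f_2>0$, $\partial_{U_3}f_2>0$ and $\partial_{U_2}f_3>0$: thus the pair $(U_2,U_3)$ is cooperative and is driven upward by $U_1$, while $U_1$ is driven downward by $U_3$. These are exactly the relations that tell us which companion components to freeze in each equation when checking the defining inequalities, so I would state them once at the outset and refer back to them at every comparison.

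Write $\mathbf U^\flat=(\ol U_1,\ol U_2,\ol U_3)$ and $\mathbf U^\sharp=(\ud U_1,\ud U_2,\ud U_3)$. The first, and I expect the main, task is to check that $(\mathbf U^\flat,\mathbf U^\sharp)$ is an \emph{ordered} pair of coupled lower and upper solutions of \eqref{2.1}, that is, that $\mathbf U^\flat\le\mathbf U^\sharp$ on $[0,\yy)$ and that the two triples satisfy the appropriate one-sided differential inequalities for \eqref{2.1}. Each such inequality should reduce to inspecting the sign of a single dropped or added term: for the $U_1$-equation one compares $-d_1\ol U_1''=\theta-a\ol U_1$ from \eqref{2.2} and $-d_1\ud U_1''=f_1(\ud U_1,\ol U_3)$ from \eqref{2.25} against $f_1$ evaluated at the companion $U_3$ dictated by $\partial_{U_3}f_1<0$; for the $(U_2,U_3)$-block the inequalities are in fact equalities, since \eqref{2.24} and \eqref{2.27} are precisely \eqref{2.1} with $U_1$ replaced by $\ol U_1$ and by $\ud U_1$, and cooperativity makes the required monotone dependence on that frozen first component transparent. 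Establishing the ordering $\mathbf U^\flat\le\mathbf U^\sharp$ is the delicate point: it must be propagated through the chain $\ol U_1\rightsquigarrow(\ol U_2,\ol U_3)\rightsquigarrow\ud U_1\rightsquigarrow(\ud U_2,\ud U_3)$ using the comparison principle at each link, and here condition \eqref{2.26} is indispensable, since it is exactly what conclusion $(3)$ following Theorem \ref{th2.4} needs in order that the $(\ud U_2,\ud U_3)$-component be a genuinely positive member of the pair.

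With the ordered pair in hand, existence of a solution of \eqref{2.1} lying in the order interval $[\mathbf U^\flat,\mathbf U^\sharp]$ follows from the standard monotone iteration for mixed-quasimonotone systems: I would iterate from $\mathbf U^\flat$ and from $\mathbf U^\sharp$ with a large diagonal shift $M$ that renders the substituted right-hand sides monotone, solve the resulting decoupled linear problems first on $(0,l)$ and pass to the limit $l\to\yy$ through the uniform elliptic estimates and the diagonal argument already employed in Lemma \ref{l2.2}, and obtain two monotone sequences squeezing onto a solution between $\mathbf U^\flat$ and $\mathbf U^\sharp$. This settles the existence assertion.

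For the second assertion I would take an arbitrary bounded positive solution $(U_1,U_2,U_3)$ of \eqref{2.1} and trap it between the same two triples. Since the first equation of \eqref{2.1} differs from \eqref{2.2} only by a sign-definite infection term, the comparison principle bounds $U_1$ on one side by $\ol U_1$; feeding this bound successively into \eqref{2.24}, \eqref{2.25} and \eqref{2.27}, and invoking at each stage the uniqueness of bounded positive solutions proved in Theorems \ref{th2.2} and \ref{th2.3}, one recovers both inequalities in \eqref{2.28} simultaneously, while the uniform positivity \eqref{2.15} is what keeps the lower estimate from degenerating as $x\to\yy$. The crux throughout is the monotone bookkeeping of which triple controls $(U_1,U_2,U_3)$ from which side; once the four sign relations are fixed at the start, every comparison in the squeeze is an application of the maximum principle on the half line.
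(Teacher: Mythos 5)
Your architecture is the same as the paper's: it too reads \eqref{2.1} as a mixed quasimonotone system, takes $(\ol U_1,\ol U_2,\ol U_3)$ and $(\ud U_1,\ud U_2,\ud U_3)$ as coupled ordered upper and lower solutions, solves truncated problems \eqref{2.29} on $(0,l)$ by the standard upper--lower solution (monotone iteration) method, and lets $l\to\infty$ via interior elliptic estimates and a diagonal argument. One detail you leave unspecified is the boundary condition at $x=l$: the paper imposes $U_i(l)=\ol U_i(l)$, which is exactly what keeps both triples admissible as upper and lower solutions of the truncated problem (with zero Dirichlet data at $x=l$ the positive triple could not serve as a lower solution).

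There is, however, one point where the proof as you describe it would break: the orientation of your order relation. The comparison chain you yourself propose yields $\ud U_1\le\ol U_1$, not the reverse, since the equation \eqref{2.25} for $\ud U_1$ carries the extra nonpositive infection term $-b\,\ud U_1\ol U_3/(1+\ol U_3)$ relative to \eqref{2.2}; cooperativity of the $(U_2,U_3)$ block and $\partial_{U_1}f_2>0$ then give $(\ud U_2,\ud U_3)\le(\ol U_2,\ol U_3)$. So the underlined triple is the \emph{lower} solution, the overlined triple is the \emph{upper} one, and the correct sandwich is $(\ud U_1,\ud U_2,\ud U_3)\le(U_1,U_2,U_3)\le(\ol U_1,\ol U_2,\ol U_3)$. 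The inequalities in \eqref{2.28} as printed are reversed (a slip in the paper), and your plan to establish $\mathbf U^\flat\le\mathbf U^\sharp$, i.e. $\ol U_i\le\ud U_i$, through the chain would fail because every link gives the opposite sign; you should simply swap the roles of the two triples. Apart from this, your final paragraph in fact does more than the paper: the paper verifies the sandwich only for the solution it constructs (passing \eqref{2.32a} to the limit), whereas the theorem claims it for \emph{every} bounded positive solution. Your squeeze -- $U_1\le\ol U_1$ by scalar comparison, then successive use of \eqref{2.24}, \eqref{2.25}, \eqref{2.27} and the uniqueness in Theorems \ref{th2.2} and \ref{th2.3} -- is what that clause genuinely requires. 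Note only that for the cooperative block this is not a bare maximum principle on the half line: an arbitrary solution of \eqref{2.1} furnishes merely a bounded lower solution of \eqref{2.4} with $\rho=\ol U_1$, so one must rerun the scaling ($\ud\xi$) argument of Lemma \ref{l2.2}, and it is precisely there that the uniform positivity \eqref{2.15} you invoke enters.
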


\begin{proof} It is easy to see that $(\ol U_1, \ol U_2, \ol U_3)$ and $(\ud U_1, \ud U_2, \ud U_3)$ are the coupled ordered upper and lower solutions of \qq{2.1}.

Let $l^*$ be given by \qq{2.8} with $\rho(x)=\ol U_1(x)$. For any given $l>2l^*$, we consider the problem
\bes\begin{cases}
-d_1U_1''=f_1(U_1,U_3), \; &0<x<l,\\
-d_2U_2''=f_2(U_1,U_2,U_3), &0<x<l,\\
-d_3U_3''=f_3(U_2,U_3), &0<x<l,\\
U_i(0)=0, \;\; U_i(l)=\ol U_i(l),\;\;&i=1,\,2,\,3.
  \label{2.29}\end{cases}\ees
It is obvious that $(\ol U_1, \ol U_2, \ol U_3)$ and $(\ud U_1, \ud U_2, \ud U_3)$ are also the coupled ordered upper and lower solutions of \qq{2.29}. By the standard upper and lower solutions method we know that \qq{2.29} has at least one positive solution, denoted by $(U_{1l}, U_{2l}, U_{3l})$, and
 \bes
 (\ol U_1, \ol U_2, \ol U_3)\le(U_{1l}, U_{2l}, U_{3l})\le (\ud U_1, \ud U_2, \ud U_3), \;\; \forall\; 0\leq x\leq l.\lbl{2.32a}\ees
Applying the local estimation and compactness argument, it can be concluded that there exist $U_1, U_2, U_3\in C^2([0,\yy))$, such that $\lim_{l\to\yy}(U_{1l}, U_{2l}, U_{3l})=(U_1, U_2, U_3)$ in $\left[C^2_{\rm loc}([0,\infty))\right]^3$, and $(U_1, U_2, U_3)$ is a bounded positive solution of \eqref{2.1}. From \qq{2.32a} we see that \qq{2.28} holds.
\end{proof}

In the following we will use the theory of topological degree in cones to prove the existence of bounded positive solutions of \qq{2.1} under the weaker condition $bk\theta>acq$. Some contents of this subsection are similar to the above subsection. For the convenience of readers and completeness, we will provide details.

For any given $l>0$, we first consider the following boundary value problem
\bes\begin{cases}
-d_1U_1''=f_1(U_1,U_3), \; &0<x<l,\\
-d_2U_2''=f_2(U_1,U_2,U_3), &0<x<l,\\
-d_3U_3''=f_3(U_2,U_3), &0<x<l,\\
U_1=U_2=U_3=0, & x=0,\, l.
  \label{2.30}\end{cases}\ees

As $bk\theta>acq$, we can find $\ep>0$ such that
 \bess
 bk\kk(\theta/a-\ep\rr)>(c+\ep d_2)(q+\ep d_3).\eess
It is easy to see that the boundary value problem
 \bess\begin{cases}
 -d_2U_1''=\theta -aU_1,\;\;0<x<l,\\
 U_1(0)=U_1(l)=0
 \end{cases}\eess
has a unique positive solution $\widetilde U_{1l}(x)$. Moreover,
$\lim_{l\to\yy}\widetilde U_{1l}(x)=\ol U_1(x)$ in $C^2_{\rm loc}([0,\yy))$,
where $\ol U_1(x)$ is the unique bounded positive solution of \qq{2.2} and
$\lim_{x\to\yy}\ol U_1(x)=\theta/a$. Let $\lm_1(l)>0$ be the principle eigenvalue of \qq{2.7}. Then there exist $l^*\gg 1$ and $l_0>2l^*$ such that
 \bes
 \lm_1(l)<\ep,\;\;\forall\, l\ge l^*;\;\;\;\widetilde U_{1l}(x)>\theta/a-\ep \;\;{\rm in}\;\; [l^*, 2l^*],\;\;\forall\;l\ge l_0.\lbl{2.31}
 \ees

\begin{theo}\lbl{th2.6} Let $\mathcal{R}_0={bk\theta}/(acq)>1$ and $l_0$ be given in the above. Then the problem \qq{2.30} has at least one positive solution $(U_{1l}, U_{2l}, U_{3l})$ when $l\ge l_0$. Moreover, there exists a subsequence of $\{(U_{1l}, U_{2l}, U_{3l})\}$, denoted by it self, such that
 \bes
 \lim_{l\to\yy}(U_{1l}, U_{2l}, U_{3l})=(U_1, U_2, U_3)
 \;\;\;{\rm in}\;\; [C^2_{\rm loc}([0,\yy))]^3,
 \lbl{2.34}\ees
and $(U_1, U_2, U_3)$ is a bounded positive solution of \qq{2.1}.
\end{theo}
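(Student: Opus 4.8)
The plan is to follow the two-stage strategy already used for the subsystem: first establish, for each fixed $l\ge l_0$, a positive solution of \qq{2.30} by the fixed point index in cones (mirroring Lemma \ref{l2.1}), and then pass to the limit $l\to\yy$ as in Lemma \ref{l2.2} and Theorem \ref{th2.5}. The essential new difficulty compared with Lemma \ref{l2.1} is that $(0,0,0)$ is no longer a solution of \qq{2.30}, since $f_1(0,0)=\theta>0$; the role of the trivial solution is played instead by the semi-trivial solution $(\widetilde U_{1l},0,0)$, where $\widetilde U_{1l}$ is the unique positive solution of the scalar $U_1$-problem with $U_2=U_3\equiv0$. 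One checks directly that $(\widetilde U_{1l},0,0)$ is the only nonnegative solution of \qq{2.30} for which $U_3\equiv0$ (and then necessarily $U_2\equiv0$), while every nonnegative solution with $U_3\not\equiv0$ is componentwise positive by the strong maximum principle.

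For the finite-$l$ existence I would set $U=(U_1,U_2,U_3)$, $\mathscr{L}={\rm diag}(-d_1\Delta,-d_2\Delta,-d_3\Delta)$, and copy the cone set-up of Lemma \ref{l2.1} with $E=X^3$, $W=K^3$. The maximum principle yields the $l$-independent bounds $U_1\le\theta/a$, $U_2\le b\theta/(ac)$, $U_3\le\mathcal{R}_0$ for every nonnegative solution of $\mathscr{L}U=\tau f(U)$, $0\le\tau\le1$, so one defines $\mathcal{O}$ and the positive compact homotopy $G_\tau=(\mathscr{L}+M)^{-1}(\tau f(U)+MU)$ (with $M\ge\max\{a+b,c,q\}$) exactly as before and obtains ${\rm deg}_W(I-G,\mathcal{O})={\rm deg}_W(I-G_0,\mathcal{O})=1$. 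It then suffices to show ${\rm index}_W(G,(\widetilde U_{1l},0,0))=0$: since the only semi-trivial fixed point is $(\widetilde U_{1l},0,0)$, the additivity of the index would otherwise give $1={\rm index}_W(G,(\widetilde U_{1l},0,0))=0$, a contradiction, so a further, necessarily positive, fixed point must exist. At $(\widetilde U_{1l},0,0)$ one has $\overline W_{(\widetilde U_{1l},0,0)}=X\times K\times K$ and $S_{(\widetilde U_{1l},0,0)}=X\times\{0\}\times\{0\}$ (the first slot is unconstrained because $\widetilde U_{1l}$ lies in the interior of $K$ by the Hopf lemma). In the Jacobian of $f$ at this point the $(\phi_2,\phi_3)$ equations decouple from $\phi_1$ and coincide with \qq{2.12} for $\rho=\widetilde U_{1l}$; invoking \qq{2.31}, which gives $\widetilde U_{1l}>\theta/a-\ep$ on $[l^*,2l^*]$, one repeats verbatim Steps 1--2 of Lemma \ref{l2.1} (testing against the eigenfunction $\psi$ of \qq{2.7} to exclude $\mu=1$, and the Hess eigenvalue argument to produce an eigenvalue $\mu>1$ with eigenvector in $\overline W\setminus S$) and concludes ${\rm index}_W(G,(\widetilde U_{1l},0,0))=0$ by \cite[Corollary 3.1]{WYcnsns24}. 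The slaved component $\phi_1$ is recovered by solving its own invertible Dirichlet problem once $\phi_3$ and $\mu$ are fixed.

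For the limit, the bounds above are uniform in $l$, so classical interior elliptic estimates together with a diagonal argument produce a subsequence with $(U_{1l},U_{2l},U_{3l})\to(U_1,U_2,U_3)$ in $[C^2_{\rm loc}([0,\yy))]^3$, the limit being a bounded nonnegative solution of \qq{2.1}. It remains to prove that the limit is positive, and this is the main obstacle. By the dichotomy above, positivity fails only if $U_3\equiv0$, hence $U_2\equiv0$ and, passing to the limit in the $U_1$-equation, $U_1=\ol U_1$. I would rule this out by a spectral testing argument on the fixed interval $[l^*,2l^*]\subset(0,l)$. In this scenario $U_{1l}\to\ol U_1>\theta/a-\ep$ there (enlarging $l^*$ if necessary, using $\ol U_1\to\theta/a$), while $m_l:=\max_{[l^*,2l^*]}U_{3l}\to0$. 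Testing the $U_{2l}$- and $U_{3l}$-equations against the principal eigenfunction $\psi$ of \qq{2.7} with $l=l^*$ and integrating by parts over $[l^*,2l^*]$ (the boundary terms are favourably signed because $\psi'(l^*)>0$, $\psi'(2l^*)<0$ and $U_{il}>0$) gives
\bess
\frac{b(\theta/a-\ep)}{1+m_l}\int_{l^*}^{2l^*}\!\psi U_{3l}\,\dx&<&(c+d_2\lm_1(l^*))\int_{l^*}^{2l^*}\!\psi U_{2l}\,\dx,\\
\frac{k}{1+m_l}\int_{l^*}^{2l^*}\!\psi U_{2l}\,\dx&<&(q+d_3\lm_1(l^*))\int_{l^*}^{2l^*}\!\psi U_{3l}\,\dx.
\eess
Multiplying these and letting $l\to\yy$ (so $m_l\to0$) yields $bk(\theta/a-\ep)\le(c+d_2\lm_1(l^*))(q+d_3\lm_1(l^*))$, which contradicts the choice of $\ep$ and $l^*$, since $\lm_1(l^*)<\ep$ forces $(c+d_2\lm_1(l^*))(q+d_3\lm_1(l^*))<(c+\ep d_2)(q+\ep d_3)<bk(\theta/a-\ep)$. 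Hence $U_3\not\equiv0$, so $U_3>0$ in $(0,\yy)$, and then $U_1,U_2>0$, completing the proof. The delicate point is precisely this last step: extracting a quantitative, $l$-independent lower bound near $[l^*,2l^*]$ from the super-threshold condition $\mathcal{R}_0>1$ alone. The crude lower bound $U_{1l}\ge\theta/(a+b)$ is insufficient (it would only cover $\mathcal{R}_0>1+b/a$); one must instead exploit the convergence $U_{1l}\to\ol U_1$ in the degenerating scenario and combine it with the spectral condition \qq{2.31}.
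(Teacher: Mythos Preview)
Your degree-theoretic existence argument for fixed $l$ (the cone setup, the homotopy giving ${\rm deg}_W(I-G,\mathcal{O})=1$, the identification of $(\widetilde U_{1l},0,0)$ as the unique semi-trivial fixed point with $\overline W=X\times K\times K$, $S=X\times\{0\}\times\{0\}$, and the index-zero computation via the decoupled $(\phi_2,\phi_3)$ linearization and the Hess eigenvalue) coincides with the paper's Steps~1--5 essentially verbatim, as does the passage to a $C^2_{\rm loc}$-subsequential limit.

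Where you diverge from the paper is in the proof that the limit is positive. The paper argues by comparison: assuming $(U_2,U_3)\equiv(0,0)$ and hence $U_1=\ol U_1$, it finds $L$ and sequences $\ell_n<l_n\to\infty$ with $U_{1l_n}>(\theta-\ep)/a=:m$ on $[L,\ell_n]$, then invokes Lemma~\ref{l2.1} to produce the unique positive solution $(U_{2n},U_{3n})$ of the constant-coefficient subsystem with $\rho\equiv m$ on $(L,\ell_n)$, uses comparison to get $(U_{2l_n},U_{3l_n})\ge(U_{2n},U_{3n})$, and finally passes to the limit via Lemma~\ref{l2.2} to contradict $(U_2,U_3)\equiv 0$. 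Your route is more direct: you stay on the fixed window $[l^*,2l^*]$, use $U_{1l}\to\ol U_1>\theta/a-\ep$ and $m_l=\max_{[l^*,2l^*]}U_{3l}\to0$ along the subsequence, and test the $(U_{2l},U_{3l})$ equations against the eigenfunction $\psi$ of \qq{2.7}; the boundary terms are favourably signed, the nonlinear denominators are controlled by $(1+m_l)$, and multiplying the two resulting inequalities yields $bk(\theta/a-\ep)\le(1+m_l)^2(c+d_2\lm_1(l^*))(q+d_3\lm_1(l^*))$, contradicting the choice of $\ep,l^*$ once $m_l\to0$. This is a legitimate and arguably cleaner argument: it avoids reinvoking Lemmas~\ref{l2.1}--\ref{l2.2} and needs only the eigenfunction test already used in Step~1 of Lemma~\ref{l2.1}. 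The paper's approach, by contrast, yields a concrete lower barrier $(U_2^L,U_3^L)$ for the limit, which is more than is needed here but is consonant with the machinery built in Section~\ref{s2.1}. Note that no ``enlarging $l^*$'' is actually required: since $\widetilde U_{1l}\le\ol U_1$, condition \qq{2.31} already forces $\ol U_1>\theta/a-\ep$ on $[l^*,2l^*]$.
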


\begin{proof} {\it Step 1}. Similar to the subsection \ref{s2.1}, we set $\boldsymbol{0}=(0,0,0)$ and
 \[U=(U_1, U_2, U_3),\;\;\mathscr{L}={\rm dig}(-d_1\Delta, -d_2\Delta,-d_3\Delta),\;\;\;f(U)=(f_1(U),\,f_2(U),\,f_3(U)),\]
where $\Delta=\,''$. And define
 \bess
  E&=&X^3 \;\;\text{with}\;\; X=\{z\in C^1([0,l]):\, z(0)=z(l)=0\},\\[1mm]
  W&=&K^3\;\;\text{with}\;\; K=\{z\in X:\, z\geq 0\}.
 \eess
For $U\in W$, we define $W_{U}$ and $S_{U}$ as the manner in subsection \ref{s2.1}.

Then \qq{2.30} can be written as
 \bes\begin{cases}
 \mathscr{L}(U)=f(U),\; &0<x<l,\\
 U=\boldsymbol{0},&x=0,\; l.
 \end{cases}\lbl{2.35}\ees
Let $0\le\tau\le 1$ and $U^\tau$ be a nonnegative solution of
\bes\begin{cases}
 \mathscr{L}(U)=\tau f(U),\; &0<x<l,\\
 U=\boldsymbol{0},&x=0,\; l.
 \end{cases}\lbl{2.36}\ees
It then follows that
 \bes
 U_1^\tau(x)\le\frac\theta a,\;\;\;U_2^\tau(x)\le \frac{b\theta(\sqrt{\mathcal{R}_0}-1)}{ac\sqrt{\mathcal{R}_0}},\;\;\;U_3^\tau(x)\leq\sqrt{\mathcal{R}_0}-1,\;\;\; 0\le x\le l
 \lbl{2.37}\ees
by the maximum principle, where $\mathcal{R}_0={kb\theta }/(acq)$. Moreover,
$\|U^\tau\|_{C^1([0, l])}\le C$ for all $0\le\tau\le 1$ by the standard elliptic theory and imbedding theorem. Define
  \[\mathcal{O}=\{U\in W:\, \|U\|_{C^1([0,l])}<C+1\}.\]
Then the problem \qq{2.36} has no solution on $\partial{\mathcal O}$.

Take a large constant $M$ such that
  \[\tau f(U)+MU\ge\boldsymbol{0},\;\; \forall\; U\in\ol{\mathcal{O}},\;0\le\tau\le 1.\]
And define operator $F_\tau$ by
 \[F_\tau(U)=(\mathscr{L}+M)^{-1}(\tau f(U)+MU),\;\;U\in E,\;0\le\tau\le 1.\]
Then $F_\tau:\, \mathcal{O}\to W$ is positive and compact for any given  $0\le\tau\le 1$, and $U\in{\mathcal O}$ is a solution of \eqref{2.35} if and only if $F_1(U)=U$. Moreover, $F_\tau(U)\not=U$ for any $0\leq\tau\leq 1$ and $U\in\partial{\mathcal O}$. Thus, by the homotopy invariance we have
 \bes
 {\rm deg}_W(I-F_\tau, {\mathcal O})={\rm deg}_W(I-F_0, {\mathcal O})=1,\;\;\forall \, 0\leq \tau\leq 1.
 \lbl{2.38}\ees
The proof of ${\rm deg}_W(I-F_0, {\mathcal O})=1$ is standard (cf. \cite[Lemma 5.1]{WPbook24}). We denote $F_1$ by $F$.

{\it Step 2}. Clearly, $(\widetilde U_{1l}, 0, 0)$ is the unique trivial nonnegative solution of \qq{2.30}. Now we calculate ${\rm index}_W(F, (\widetilde U_{1l}, 0,0))$. By the direct calculations,
 \[\overline{W}_{(\widetilde U_{1l},0,0)}=X\times K\times K,\;\;S_{(\widetilde U_{1l},0,0)}=X\times\{(0,0)\},\]
and
 \[F'(\widetilde U_{1l},0,0)=(\mathscr{L}+M)^{-1}\kk(\begin{array}{ccc}
  M-a \; &0 \; &-b\widetilde U_{1l}\\
  0 \; &\; M-c \; &b\widetilde U_{1l}\\
  0 \; &k \; \; & M-q\end{array} \rr). \]

{\it Step 3}. Prove that
 \bess
 F'(\widetilde U_{1l}, 0,0)\phi=\phi,\;\;\phi\in \overline{W}_{(\widetilde U_{1l}, 0,0)}
 \eess
has only the zero solution when $l>l_0$. Let $\phi=(\phi_1, \phi_2, \phi_3)\in \overline{W}_{(\widetilde U_{1l}, 0,0)}$ and satisfy $F'(\widetilde U_{1l}, 0,0)\phi=\phi$. Then $\phi_2, \phi_3\ge 0$ and $\phi$ satisfies
 \bess\begin{cases}
 -d_1\phi_1''=-a\phi_1-b\widetilde U_{1l}\phi_3, \;\;&0<x<l,\\
 -d_2\phi_2''=-c\phi_2+b\widetilde U_{1l}\phi_3, \;\;&0<x<l,\\
 -d_3\phi_3''=k\phi_2-q\phi_3, \;\;&0<x<l,\\
 \phi_i(0)=\phi_i(l)=0,\;\;&i=1, 2, 3.
 \end{cases}\eess
Making use of \qq{2.31}, similar to Step 1 in the proof of Lemma \ref{l2.1} we can show that $(\phi_2, \phi_3)=(0,0)$ and then $\phi_1=0$.

{\it Step 4}. We show that the eigenvalue problem
   \bes
 F'(\widetilde U_{1l}, 0,0)\phi=\mu\phi,\;\;\phi\in\overline{W}_{(\widetilde U_{1l}, 0,0)}\setminus S_{(\widetilde U_{1l}, 0,0)}
 \lbl{2.39}
 \ees
has an eigenvalue $\mu>1$. Owing to \cite[Theorem 1]{Hess83}, the eigenvalue problem
 \bess\begin{cases}
 -d_2\phi_2''+M\phi_2=r[(M-c)\phi_2+b\widetilde U_{1l}\phi_3],\; &0<x<l,\\
 -d_3\phi_3''+M\phi_3=r[k\phi_2+(M-q)\phi_3],\; &0<x<l,\\
 \phi_i(0)=\phi_i(l)=0, \;\;i=2,3
 \end{cases}\eess
has a positive principle eigenvalue $r$ with positive eigenfunction $(\phi_2, \phi_3)$, i.e., $\phi_2, \phi_3>0$. By the result of Step 3 we have $r\not=1$. In the following we shall show that $r<1$.

Assume on the contrary that $r>1$. Since $M>c$ and $M>q$, it follows that
 \bess\begin{cases}
 -d_2\phi_2'=[r(M-c)-M]\phi_2+rb\widetilde U_{1l}\phi_3]>-c\phi_2+b\widetilde U_{1l}\phi_3,\; &0<x<l,\\
 -d_3\phi_3''=rk\phi_2+[r(M-q)-M]\phi_3]>k\phi_2-q\phi_3,\; &0<x<l,\\
 \phi_i(0)=\phi_i(l)=0, \;\;i=2,3.
 \end{cases}\eess
Similar to Step 1 in the proof of Lemma \ref{l2.1} we can derive a contradiction. Thus $r<1$.

It is well known that the boundary problem
 \bess\begin{cases}
 -d_1\phi_1''+[M-r(M-a)]\phi_1=-rb\widetilde U_{1l}\phi_3,\; 0<x<l,\\
 \phi_1(0)=\phi_1(l)=0
  \end{cases}\eess
has a unique solution $\phi_1\in X$ as $M-r(M-a)>0$. Set $\phi=(\phi_1,\phi_2, \phi_3)$. Then $(\phi_1,\phi_2, \phi_3)\in\overline{W}_{(\widetilde U_{1l}, 0,0)}\setminus S_{(\widetilde U_{1l}, 0,0)}$ and $(r, \phi)$ satisfies
 \bess\begin{cases}
-d_1\phi_1''+M\phi_1=r[(M-a)\phi_1-b\widetilde U_{1l}\phi_3],\; &0<x<l,\\
 -d_2\phi_2''+M\phi_2=r[(M-c)\phi_2+b\widetilde U_{1l}\phi_3],\; &0<x<l,\\
 -d_3\phi_3''+M\phi_3=r[k\phi_2+(M-q)\phi_3],\; &0<x<l,\\
 \phi_i(0)=\phi_i(l)=0, \;\;&i=1,2,3
 \end{cases}\eess
This shows that $\mu=1/r>1$ is an eigenvalue of \qq{2.39}.

{\it Step 5}. Making use of \cite[Corollry 3.1]{WYcnsns24} we have ${\rm index}_W(F,(\widetilde U_{1l}, 0,0))=0$. Owing to \qq{2.38}, it is deduced by the theory of topological degree in cones that \qq{2.35} also has a solution within $\mathcal{O}$ that is different from $(\widetilde U_{1l}, 0,0)$. It is clear that  such a solution is positive. So \qq{2.30} has least one positive solution $(U_{1l}, U_{2l}, U_{3l})$ when $l\ge l_0$.

The estimates \qq{2.37} show that $(U_{1l}, U_{2l}, U_{3l})$ is bounded uniformly in $l>0$. Using classical elliptic regularity theory (uniformly estimate) and a diagonal procedure we can find a subsequence of $\{(U_{1l}, U_{2l}, U_{3l})\}$, denoted by it self, and $U_1, U_2, U_3
\in C^2([0,\yy))$ such that \qq{2.34} holds. It is obvious that $(U_1, U_2, U_3)$ is a bounded nonnegative solution of \qq{2.1}.

{\it Step 6}. We shall show that $U_1, U_2, U_3$ are positive in $(0,\yy)$. Firstly, since $U_1$ satisfies
 \bess
 -d_1U_1''+\kk(a+\frac{bU_3}{1+U_3}\rr)U_1=\theta>0,\;\;0<x<\yy,
 \eess
it is obvious that $U_1>0$ in $(0,\yy)$. If $U_2\not\equiv 0$, then $U_3$ satisfies
 \bess
 -d_3U_3''+qU_3=\dd\frac{kU_2}{1+U_3}\geq,\,\not\equiv 0,\;\;0<x<\yy.
 \eess
Therefore, $U_3>0$ and so $U_2>0$ in $(0,\yy)$ by the maximum principle. This indicates that if our conclusion is not true, then $U_2, U_3\equiv 0$ and $U_1=\ol U_1$, which is the unique bounded positive solution of \qq{2.2}. Hence $U_1$ is strictly increasing in $(0,\yy)$ and $\lim_{x\to\yy}U_1(x)=\theta/a$. That is
 \bess
 \lim_{l\to\yy}U_{1l}=U_1,\;\;\lim_{l\to\yy}U_{2l}=\lim_{l\to\yy}U_{3l}=0\;\;\;
 {\rm in}\;\;C^2_{\rm loc}([0,\yy)).
 \eess

Since $bk\theta>acq$, there exists $0<\ep\ll 1$ such that
 \bes
 bk(\theta-\ep)>acq.
 \lbl{2.40}\ees
Thanks to $\lim_{x\to\yy}U_1(x)=\theta/a$, there exists $L>0$ such that
$U_1(x)>(\theta-\ep/2)/a$ for $x\ge L$. Taking $L<\ell_n\nearrow\yy$. For each of these $\ell_n$, as $\lim_{l\to\yy}U_{1l}(x)=U_1(x)$, there exists $l_n>\ell_n$ such that
 \bess
 U_{1l_n}(x)>(\theta-\ep)/a=:m\;\;\;{\rm in}\;\;[L, \ell_n].
 \eess
Then $(U_{2l_n}, U_{3l_n})$ satisfies
\bess\begin{cases}
-d_2U_{2l_n}''\ge f_2(m, U_{2l_n}, U_{3l_n}), &L<x<\ell_n,\\
-d_3U_{3l_n}''=f_3(U_{2l_n}, U_{3l_n}), &L<x<\ell_n,\\
U_{2l_n}, U_{3l_n}>0, & x=L, \ell_n.
  \end{cases}\eess
Noticing that \qq{2.40} and according to Lemma \ref{l2.1}, there exists $l_*\gg1$ such that, when $\ell_n>l_*$, the problem
 \bess\begin{cases}
-d_2U_2''=f_2(m, U_2, U_3), &L<x<\ell_n,\\
-d_3U_3''=f_3(U_2, U_3), &L<x<\ell_n,\\
U_2=U_3=0, & x=L, \ell_n
  \end{cases}\eess
has a unique positive solution, denoted by $(U_{2n}, U_{3n})$. The comparison principle gives
 \bes
 (U_{2l_n}, U_{3l_n})\ge (U_{2n}, U_{3n})\;\;\;{\rm in}\;\;[L, \ell_n].
 \lbl{2.41}\ees
Similar to Lemma \ref{l2.2} we can show that $\lim_{n\to\yy}(U_{2n}, U_{3n})=(U_2^L, U^L_3)$ in $[C^2_{\rm loc}([L,\yy))]^2$,
and $(U^L_2, U^L_3)$ is the unique positive solution of
 \bess\begin{cases}
-d_2U_2''=f_2(m, U_2, U_3), &L<x<\yy,\\
-d_3U_3''=f_3(U_2, U_3), &L<x<\yy,\\
U_2(L)=U_3(L)=0.
  \end{cases}\eess
This combines with \qq{2.41} indicates that $U_2, U_3>0$ in $[L,\yy)$. We have a contradiction.
\end{proof}

\section{Long time behaviors of solution component $(u_1,u_2,u_3)$}

Throughout this section, the positive constant $C_0$ was given in Theorem \ref{th1.1}.

\begin{theo}\lbl{th3.1a} Let $(u_1,u_2,u_3,h)$ be the unique global solution of \qq{1.3}. If $\mathcal{R}_0={\theta bk}/(acq)\le 1$, then
 \bess
 &\dd\lim_{t\to\yy}u_1(t,x)=\ol U_1(x) \;\;{\rm locally~uniformly~in~} [0,\yy),&\\
 &\dd\lim_{t\to\yy}u_2(t,x)=0,\;\; \lim_{t\to\yy}u_3(t,x)=0 \;\;{\rm uniformly~in~} [0,\yy),&
 \eess
where $\ol U_1(x)$ is the unique bounded positive solution of \qq{2.2}.
  \end{theo}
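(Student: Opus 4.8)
The plan is to establish the long-time behavior of $(u_1,u_2,u_3)$ when $\mathcal{R}_0\le 1$ by an upper-solution (comparison) argument combined with the fact that $h_\yy=\yy$. Since Theorem \ref{th2.1} already tells us that the equilibrium system \qq{2.1} has no positive solution in this regime, the expectation is that the infected cells and virus $u_2,u_3$ extinguish, while the uninfected cell density $u_1$ is driven toward the pure-$u_1$ steady state $\ol U_1$.

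First I would handle the extinction of $(u_2,u_3)$. The key observation is that $u_1$ is bounded above: by the comparison principle applied to the $u_1$-equation (dropping the nonnegative loss term $-bu_1u_3/(1+u_3)$), one gets $\limsup_{t\to\yy}u_1(t,x)\le\theta/a$ uniformly, so for any small $\ep>0$ there is $T_\ep$ with $u_1\le\theta/a+\ep$ for $t\ge T_\ep$. Then $(u_2,u_3)$ is a subsolution of the cooperative system obtained by freezing $u_1$ at $\theta/a+\ep$ on the whole half-line (using $u_2=u_3=0$ outside $(0,h(t))$ and the Dirichlet condition at $x=0$). Because $\mathcal{R}_0=kb\theta/(acq)\le 1$, the associated spatially-independent ODE system (or the Neumann elliptic problem as in the proof of Theorem \ref{th2.1}) has only the trivial nonnegative equilibrium, and choosing $\ep$ small keeps the reproduction number of the frozen system below $1$. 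Comparison with the solution of this limiting problem on $[0,\yy)$—whose solution decays to $(0,0)$ uniformly—forces $\lim_{t\to\yy}u_2=\lim_{t\to\yy}u_3=0$ uniformly in $[0,\yy)$.

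Next I would upgrade the convergence of $u_1$ to $\ol U_1$. Once $u_3(t,x)\to 0$ uniformly, the nonlinear infection term $bu_1u_3/(1+u_3)$ becomes a small perturbation, so for large $t$ the $u_1$-equation is close to $\partial_t u_1-d_1\partial_{xx}u_1=\theta-au_1$ on the expanding domain $(0,h(t))$ with $h(t)\to\yy$. I would squeeze $u_1$ between sub- and supersolutions built from $\ol U_1$: for the upper bound, $\ol U_1$ itself (the bounded positive solution of \qq{2.2}) serves as a stationary supersolution; for the lower bound, one solves the logistic-type problem on the growing interval $(0,\ell)$ with Dirichlet data, uses $h_\yy=\yy$ to let $\ell\to\yy$, and invokes the locally uniform convergence of the corresponding elliptic solutions to $\ol U_1$. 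The perturbation $-bu_1u_3/(1+u_3)$ is absorbed by letting $\ep\to 0$ after passing to the limit, yielding $\lim_{t\to\yy}u_1=\ol U_1$ locally uniformly in $[0,\yy)$.

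The main obstacle I anticipate is the interplay between the moving boundary and the comparison arguments: the domain $(0,h(t))$ is time-dependent, so the comparison functions must be defined consistently on the whole half-line and must respect both the fixed Dirichlet condition at $x=0$ and the vanishing of $(u_2,u_3)$ outside $(g,h)$. Extending $(u_2,u_3)$ by zero is harmless for a subsolution of a cooperative system, but care is needed to verify that $\ol U_1$ and the logistic subsolutions remain ordered with $u_1$ along the free boundary $x=h(t)$, where the flux condition couples all three components. Establishing the \emph{uniform} (not merely locally uniform) decay of $u_3$ is the technically delicate point, since it underwrites both the extinction of $(u_2,u_3)$ and the reduction of the $u_1$-dynamics to the solvable logistic equation; I would lean on the global $C^1$ bounds and $h'\in C^{\alpha/2}$ estimate from Theorem \ref{th1.1} to get the parabolic regularity needed for such a uniform conclusion.
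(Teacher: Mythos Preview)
Your overall strategy coincides with the paper's: bound $u_1$ above by the solution of the spatially homogeneous ODE $\ol u_1'=\theta-a\ol u_1$ to obtain $u_1\le\theta/a+\ep$ for $t\ge T_\ep$, then compare $(u_2,u_3)$ with the ODE system obtained by freezing $u_1$ at $\theta/a+\ep$, and finally recover $u_1\to\ol U_1$ from the uniform decay of $u_3$.

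There is, however, a genuine gap in the critical case $\mathcal{R}_0=1$. Your claim that ``choosing $\ep$ small keeps the reproduction number of the frozen system below $1$'' is false precisely when $\mathcal{R}_0=1$: for \emph{every} $\ep>0$ the frozen reproduction number equals
\[
\frac{kb(\theta/a+\ep)}{cq}=1+\frac{kb\ep}{cq}>1,
\]
so the frozen algebraic system $f_2(\theta/a+\ep,v,w)=0$, $f_3(v,w)=0$ \emph{does} admit a positive root $(v_\ep,w_\ep)$, and the comparison ODE solution $(z_2^\ep,z_3^\ep)$ converges to $(v_\ep,w_\ep)$ rather than to $(0,0)$. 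The paper closes this gap by the additional observation that $(v_\ep,w_\ep)\to(0,0)$ as $\ep\to 0^+$; thus $\limsup_{t\to\yy}(u_2,u_3)\le(v_\ep,w_\ep)$ uniformly for every small $\ep$, and letting $\ep\downarrow 0$ yields the desired uniform extinction. Without this two-step limit (first $t\to\yy$, then $\ep\to 0$) your argument covers only the strict inequality $\mathcal{R}_0<1$.
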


\begin{proof} Firstly, let $\ol u_1(t)$ be the unique solution of
 \bess
 \ol u'_1(t)=\theta -a\ol u_1,\;\;t>0;\;\;\;
  \ol u_1(0)=C_0.
 \eess
Then $\lim_{t\to\yy}\ol u(t)=\theta/a$, and $u(t,x)\le\ol u(t)$ in $[0,\yy)\times[0, h(t)]$ by the comparison principle. So,
$\limsup_{t\to\yy}u_1(x,t)\le\theta/a$ uniformly in $\mathbb{R}$. For any given $0<\ep\ll 1$, there exists $T_\ep\gg1$ such that $u(t,x)<\theta/a+\ep$ for all $t\ge T_\ep$ and $x\in\mathbb{R}$. Thus we have
  \bess\begin{cases}
\partial_t u_2-d_2\partial_{xx}u_2\le f_2(\theta/a+\ep, u_2,u_3) &t>T_\ep, \; 0<x<h(t),\\
\partial_t u_3-d_3\partial_{xx}u_3=f_3(u_2,u_3), &t>T_\ep, \; 0<x<h(t),\\
u_2=u_3=0, &t>0, \ x\notin(0,h(t)),\\
u_2(T_\ep,x)\le C_0,\;\; u_3(T_\ep,x)\le C_0, &x\in(0,\yy).
 	\end{cases}\eess
Let $(z^\ep_2(t), z^\ep_3(t))$ be the unique positive solution of
 \bess\begin{cases}
(z^\ep_2)'=f_2(\theta/a+\ep, z^\ep_2, z^\ep_3), &t>T_\ep,\\
(z^\ep_3)'=f_3(z^\ep_2, z^\ep_3), &t>T_\ep,\\
z^\ep_2(T_\ep)=z^\ep_3(T_\ep)=C_0.
  \end{cases}\eess
Then either $\lim_{t\to\yy}(z^\ep_2(t), z^\ep_3(t))=(v_\ep, w_\ep)$, where $(v_\ep, w_\ep)$ is the unique positive solution of
 \bes
 f_2(\theta/a+\ep, v_\ep, w_\ep)=0,\;\;\;
 f_3(v_\ep, w_\ep)=0,
 \lbl{3.1}\ees
or $\lim_{t\to\yy}(z^\ep_2(t), z^\ep_3(t))=(0, 0)$ if \qq{3.1} does not have positive solution. When $\mathcal{R}_0<1$, we can choose $\ep>0$ is small such that \qq{3.1} has no positive solution. When $\mathcal{R}_0=1$, then \qq{3.1} has a unique positive solution $(v_\ep, w_\ep)$ and $\lim_{\ep\to 0^+}( v_\ep, w_\ep)=(0,0)$.

By the comparison argument, $(u_2(t,x), u_3(t,x))\le(z^\ep_2(t), z^\ep_3(t))$ for all $t\ge T_\ep$ and $x>0$. It follows that $\limsup_{t\to\yy}(u_2(t,x), u_3(t,x))\le(0, 0)$ uniformly in $[0,\yy)$. Thus, by the first differential equations of \qq{1.3}, $\lim_{t\to\yy}u_1(t,x)=\ol U_1(x)$  locally uniformly in $[0,\yy)$. \end{proof}

\begin{theo}\lbl{thy.1} Let $(u_1,u_2,u_3,h)$ be the unique global solution of \qq{1.3}. If $\mathcal{R}_0={\theta bk}/(acq)>1$ and $\mathcal{R}_0+\sqrt{\mathcal{R}_0}>b/a$, then
\bes
 \liminf_{t\to\infty}u_i(t,x)\geq\ud U_i(x),\;\;\limsup_{t\to\infty}u_i(t,x)\leq\ol U_i(x)\;\; \mbox{locally uniformly in}\;\, [0, \infty),\;\;i=1,2,3,
 \qquad\lbl{3.2} \ees
where $\ol U_1$, $(\ol U_2, \ol U_3)$, $\ud U_1$ and $(\ud U_2, \ud U_3)$ are the unique bounded positive solutions of \qq{2.2}, \qq{2.24}, \qq{2.25} and \qq{2.27}, respectively.
  \end{theo}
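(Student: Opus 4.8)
The plan is to run a squeezing argument built entirely from comparison principles, exploiting three structural facts: the habitat exhausts $[0,\yy)$ (so $h_\yy=\yy$), the subsystem for $(u_2,u_3)$ is cooperative (since $\pl_{u_3}f_2>0$ and $\pl_{u_2}f_3>0$), and the remaining nonlinearities are monotone in the coupling variables ($f_1$ is decreasing in $u_3$, while $f_2$ is increasing in $u_1$ and $u_3$). These monotonicities are precisely what make the ordered quadruple $\ol U_1,(\ol U_2,\ol U_3),\ud U_1,(\ud U_2,\ud U_3)$ a pair of coupled ordered upper/lower solutions of \eqref{2.1}, and the argument transports that ordering to the $t\to\yy$ limit of the parabolic flow.

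\emph{Upper bounds.} First I would bound $u_1$. Since $\pl_t u_1-d_1\pl_{xx}u_1=\theta-au_1-bu_1u_3/(1+u_3)\le\theta-au_1$ and $u_1$ vanishes on $x=0$ and $x=h(t)$, for any fixed $l$ and all $t$ with $h(t)>l$ I compare $u_1$ on $[0,l]$ with the solution of the parabolic problem $\pl_tW-d_1\pl_{xx}W=\theta-aW$ carrying $W(t,0)=0$, $W(t,l)=C_0$; as $t\to\yy$ this stabilizes to the elliptic profile on $(0,l)$ with those data, and letting $l\to\yy$ that profile converges locally uniformly to $\ol U_1$. Hence $\limsup_{t\to\yy}u_1\le\ol U_1$ locally uniformly. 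Feeding $u_1\le\ol U_1+\ep$ into the cooperative pair $(u_2,u_3)$, the vector $(u_2,u_3)$ becomes a subsolution, for $t$ large, of the system \eqref{2.24} with $\rho=\ol U_1+\ep$; comparing on $(0,l)$ with the finite-interval positive solution of Lemma \ref{l2.1}, sending $l\to\yy$ (Theorem \ref{th2.2}) and then $\ep\to0$ (using continuity/uniqueness of the equilibrium in $\rho$) yields $\limsup_{t\to\yy}(u_2,u_3)\le(\ol U_2,\ol U_3)$.

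\emph{Lower bounds.} Knowing $u_3\le\ol U_3+\ep$ eventually and that $f_1$ is decreasing in $u_3$, I obtain $\pl_t u_1-d_1\pl_{xx}u_1\ge f_1(u_1,\ol U_3+\ep)$, so comparison with \eqref{2.25} (again on $(0,l)$, then $l\to\yy$, then $\ep\to0$) gives $\liminf_{t\to\yy}u_1\ge\ud U_1$. The remaining and most delicate step is the lower bound for $(u_2,u_3)$: using $u_1\ge\ud U_1-\ep$ and the cooperativity, $(u_2,u_3)$ dominates the flow of the system obtained by freezing $u_1$ at $\ud U_1-\ep$, but turning this into a genuine positive lower bound requires that the virus persist rather than die out. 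Here hypothesis \eqref{2.26} is used precisely to guarantee that the limiting lower profile $(\ud U_2,\ud U_3)$ of \eqref{2.27} exists and is positive (Theorem \ref{th2.4}).

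\emph{The main obstacle — persistence.} To produce the positive lower bound I would localize: for fixed large $l$, once $h(t)>l$ the interior positivity of $u_2,u_3$ lets me impose zero Dirichlet data at $x=l$, so $(u_2,u_3)$ dominates the solution of the autonomous finite-interval problem on $(0,l)$ with $u_1$ frozen at $\ud U_1-\ep$. By Lemma \ref{l2.1} (the $\mathcal R_0>1$ regime), for $l$ large this finite-interval cooperative system possesses a unique positive equilibrium, and—exactly as in Step 6 of the proof of Theorem \ref{th2.6}—a monotone sub/supersolution sweep starting from a small positive lower solution shows the autonomous flow converges upward to that equilibrium. Comparison then forces $\liminf_{t\to\yy}(u_2,u_3)$ to be at least the finite-$l$ equilibrium; passing $l\to\yy$ and $\ep\to0$ and invoking uniqueness of the bounded positive solution of \eqref{2.27} (Theorem \ref{th2.2} with $\rho=\ud U_1$) identifies the limit as $(\ud U_2,\ud U_3)$. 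The crux is exactly this persistence estimate: the upper bounds and the lower bound for $u_1$ are routine comparison on a growing domain, whereas showing that $u_2,u_3$ stay uniformly bounded below is where $\mathcal R_0>1$ enters substantively, through the existence of the finite-domain positive steady state.
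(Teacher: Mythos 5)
Your overall squeezing scheme is the same as the paper's, and three of the four bounds are carried out essentially as in the paper's proof: the upper bound for $u_1$ by scalar comparison on growing intervals, the lower bound for $u_1$ by comparison with the problems approximating \eqref{2.25} (zero Dirichlet data at $x=l$ is legitimate there, since a lower comparison function may vanish where $u_1>0$), and the lower bound for $(u_2,u_3)$, where you correctly identify that \eqref{2.26} is what makes $bk\lim_{x\to\yy}\ud U_1>cq$, so that Lemma \ref{l2.1} applies with $\rho=\ud U_1-\ep$ and the parabolic flow started from small positive data below $u_i(T,\cdot)$ climbs to the unique positive equilibrium.

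However, your upper bound for $(u_2,u_3)$ has a genuine gap. You propose to compare $(u_2,u_3)$ on $(0,l)$ with the finite-interval positive solution of Lemma \ref{l2.1}, i.e.\ with the solution of \eqref{2.5}, which carries homogeneous Dirichlet data at \emph{both} endpoints. But at $x=l$ that equilibrium vanishes, while $u_2(t,l)$ and $u_3(t,l)$ are strictly positive (they vanish only at $x=h(t)>l$) and may be as large as $C_0$; the required ordering thus already fails on the parabolic boundary of $[T,\yy)\times[0,l]$, and no comparison principle can deliver $(u_2,u_3)\le(U_{2l},U_{3l})$ there. (This asymmetry is precisely why zero data at $x=l$ works for lower bounds but not for upper bounds.) Moreover, asserting that $(u_2,u_3)$ is a ``subsolution of \eqref{2.24}'' cannot by itself produce an upper bound: one needs a dominating parabolic flow with ordered initial \emph{and} boundary data that converges to an equilibrium. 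The paper's fix is to introduce the auxiliary elliptic problem \eqref{3.3}, whose boundary value at $x=l$ is $C_0+1\ge\sup u_i$, to prove uniqueness of its positive solution $(W_{l,\ep},Z_{l,\ep})$ (by arguments like Lemmas \ref{l2.2}--\ref{l2.3}), to compare $(u_2,u_3)$ with the parabolic flow started from data $\eta_i\ge u_i(T_\ep^l,\cdot)$ with $\eta_i(l)=C_0+1$, which converges down to $(W_{l,\ep},Z_{l,\ep})$, and finally to show that as $\ep\to0$ and $l\to\yy$ these modified equilibria still converge to $(\ol U_2,\ol U_3)$. Your argument needs this additional construction; as written, the step ``comparing on $(0,l)$ with the finite-interval positive solution of Lemma \ref{l2.1}'' is invalid.
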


\begin{proof} In this proof, $C_0$ is the positive constant given by Theorem \ref{th1.1}.

{\it Step 1}. Take
 \[\phi(x)=\left\{\begin{array}{ll}
  u_0(x), \ \ & 0\leq x\leq h_0,\\
  0, \ \ & x\geq h_0,
  \end{array}\right.\]
Let $v$ be the unique solution of
 \bess\bbb
 v_t-d_1v_{xx}=\theta-a v,&t>0,\;0<x<\yy,\\
 v(t,0)=0, &t>0,\\
 v(0,x)=\phi(x), &0\leq x<\infty
 \nnn\eess
Then $u_1\le v$ for $t>0$ and $0\leq x\leq h(t)$ by the comparison principle. As $\lim_{t\to\infty}v(t,x)=\ol U_1(x)$ locally uniformly in $[0, \infty)$. Hence, the second inequality of \qq{3.2} holds for $i=1$.

{\it Step 2}. For the given $l>0$ and $0<\ep\ll 1$, by use of the upper and lower solutions method it can be shown that the boundary value problem
 \bes\begin{cases}
-d_2 W_{l,\ep}''=f_2(\ol U_1(x)+\ep, W_{l,\ep}, Z_{l,\ep}),\;\;\;\,0<x<l,\\
-d_3Z_{l,\ep}''=f_3(W_{l,\ep}, Z_{l,\ep}),\hspace{23mm}0<x<l,\\
W_{l,\ep}(0)=Z_{l,\ep}(0)=0,\;\;W_{l,\ep}(l)=Z_{l,\ep}(l)=C_0+1
 	\end{cases}\lbl{3.3}\ees
has at least one positive solution, and similar to the proofs of Lemmas \ref{l2.2} and \ref{l2.3} it can be proved that the positive solution of \qq{3.3} is unique, denoted by $(W_{l,\ep}, Z_{l,\ep})$. It is clear that $\lim_{\ep\to 0}(W_{l,\ep}, Z_{l,\ep})=(W_l, Z_l)$ in $[C^2([0,l])]^2$, where $(W_l, Z_l)$ is the unique positive solution of
 \bess\begin{cases}
-d_2W_l''=f_2(\ol U_1(x), W_l, Z_l),\;\;\;\,0<x<l,\\
-d_3Z_l''=f_3(W_l, Z_l),\hspace{17mm}0<x<l,\\
W_l(0)=Z_l(0)=0,\;\;W_l(l)=Z_l(l)=C_0+1.
 	\end{cases}\eess
It is easy to show that $W_l, Z_l\le C_0+1$ in $[0,l]$. Similar to Lemma \ref{l2.2}, $\lim_{l\to\yy}(W_l, Z_l)=(\ol U_2, \ol U_3)$ in $[C^2_{\rm loc}([0,\yy))]^2$, and $(\ol U_2, \ol U_3)$ is the unique bounded positive solution of \qq{2.24}. For any given $L>0$ and $\sigma>0$, there exist $l>L$ and $0<\ep\ll 1$ such that
 \bes
 (W_{l,\ep}, Z_{l,\ep})\le (\ol U_2+\sigma, \ol U_3+\sigma)
 \;\;\;{\rm in}\;\;[0, L].
 \lbl{3.4}\ees

For the $\ep$ and $l$ identified above. Since $\limsup_{t\to\infty}u_1(t,x)\leq\ol U_1(x)$ locally uniformly in $[0, \infty)$, there exists a large $T_\ep^l$, such that
 \[h(t)>l, \;\;\; u_1(t,x)<\ol U_1(x)+\ep\ff 0\leq x\leq l, \ t\geq T_\ep^l.\]
Noticing that $u_i(T_\ep^l,0)=0$, $u_i\le C_0$, $u_i(T_\ep^l,\cdot)\in C^1([0,l])$, $i=2,3$. We can find two functions $\eta_i(x)\in C^{2+\alpha}([0,l])$, such that
 \[\eta_i(0)=0,\;\;\eta_i(l)=C_0+1,\;\;\;{\rm and}\;\; \eta_i(x)\ge u_i(T_\ep^l,x)\;\;\;{\rm in}\;\;[0,l],\;\;\;i=2,3.\]
Let $(w_{l,\ep}, z_{l,\ep})$ be the unique positive solution of the following initial-boundary value problem
 \bess\begin{cases}
\partial_t w_{l,\ep}-d_2\partial_{xx}w_{l,\ep}=f_2(\ol U_1(x)+\ep, w_{l,\ep}, z_{l,\ep}), &t>T_\ep, \ 0<x<l,\\
\partial_t z_{l,\ep}-d_3\partial_{xx}z_{l,\ep}=f_3(w_{l,\ep},z_{l,\ep}), &t>T_\ep^l, \ 0<x<l,\\
w_{l,\ep}(t,0)=z_{l,\ep}(t,0)=0,\;\;w_{l,\ep}(t,l)=z_{l,\ep}(t,l)=C_0+1, &t\ge T_\ep^l, \\
w_{l,\ep}(T_\ep^l,x)=\eta_2(x),\;\; z_{l,\ep}(T_\ep^l,x)=\eta_3(x), &x\in(0,l).
 	\end{cases}\eess
Then $(u_2, u_3)\le(w_{l,\ep}, z_{l,\ep})$ by the comparison principle, and $\lim_{t\to\yy}(w_{l,\ep}, z_{l,\ep})=(W_{l,\ep}, Z_{l,\ep})$ in $C^2([0,l])$ as  $(W_{l,\ep}, Z_{l,\ep})$ is the unique positive solution of \qq{3.3}. This combined with \qq{3.4} yields
 \bess
  \limsup_{t\to\yy}(u_2, u_3)\le\lim_{t\to\yy}(w_{l,\ep}, z_{l,\ep})
  \le (\ol U_2+\sigma, \ol U_3+\sigma) \;\;\;{\rm in}\;\;[0, L].
  \eess
The arbitrariness of $\sigma$ and $L$ indicates that the second inequality of \qq{3.2} holds for $i=2,3$.

{\it Step 3}. For the given $l>0$ and $0<\ep\ll 1$, it is easy to show that the problem
 \bess\begin{cases}
-d_1V_{l,\ep}''=f_1(V_{l,\ep},\ol U_3+\ep), \; &0<x<l,\\
V_{l,\ep}=0, & x=0,\,l
  \end{cases}\eess
has a unique positive solution $V_{l,\ep}$, and $\lim_{\ep\to 0}V_{l,\ep}=V_l$ in $C^2([0,l])$, where $V_l$ is the unique positive solution of
 \bess\begin{cases}
-d_1V_l''=f_1(V_l,\ol U_3), \; &0<x<l,\\
V_l=0, & x=0,\,l.
  \end{cases}\eess
Moreover, $\lim_{l\to\yy}V_l=\ud U_1$ in $C^2_{\rm loc}([0,\yy))$, where $\ud U_1$ is the unique positive solution of \qq{2.25}. Similar to the above, we can show that the first inequality of \qq{3.2} holds for $i=1$.

{\it Step 4}. Noticing that the condition $\mathcal{R}_0+\sqrt{\mathcal{R}_0}>b/a$ is equivalent to $bk\frac{\theta\sqrt{\mathcal{R}_0}}{(a+b)\sqrt{\mathcal{R}_0}-b}>cq$. There exists $0<\ep_0<\frac{\theta\sqrt{\mathcal{R}_0}}{(a+b)\sqrt{\mathcal{R}_0}-b}$ such that
 \bes
 bk\kk(\frac{\theta\sqrt{\mathcal{R}_0}}{(a+b)\sqrt{\mathcal{R}_0}-b}-\ep\rr)>cq,\;\;\forall\; 0<\ep<\ep_0.\lbl{3.5a}\ees
For the given $0<\ep<\ep_0$, we define
 \[\rho_\ep(x)=\left\{\begin{array}{ll}
 0,\;&\ud U_1(x)<\ep,\\
 \ud U_1(x)-\ep, \;\;&\ud U_1(x)\ge\ep.
 \end{array}\rr.\]
Then $\rho_\ep\in C^\alpha_{\rm loc}([0,\yy))$ and $\lim_{x\to\yy}\rho_\ep(x)=\frac{\theta\sqrt{\mathcal{R}_0}}{(a+b)\sqrt{\mathcal{R}_0}-b}-\ep>0$ since $\lim_{x\to\infty}\ud U_1(x)=\frac{\theta\sqrt{\mathcal{R}_0}}{(a+b)\sqrt{\mathcal{R}_0}-b}$.
Notice that \qq{3.5a} holds, we can use Lemma \ref{l2.1} to conclude that there exists $l^*>0$ such that, when $l>2l^*$, the problem
 \bes\begin{cases}
-d_2\Phi_{l,\ep}''=f_2(\rho_\ep(x),\Phi_{l,\ep},\Psi_{l,\ep}), \;& 0<x<l,\\
-d_3\Psi_{l,\ep}''=f_3(\Phi_{l,\ep},\Psi_{l,\ep}),&0<x<l,\\
\Phi_{l,\ep}=\Psi_{l,\ep}=0,\;\;&x=0,\;l
  \end{cases}\label{3.5}\ees
has a unique positive solution $(\Phi_{l,\ep}, \Psi_{l,\ep})$. Since $\rho_\ep'\in L^\yy((0,l))$ and $\|\rho_\ep'\|_{L^\yy((0,l))}\le\|\ud U_1'\|_{L^\yy((0,l))}$, it is easy to see that $\|(\Phi_{l,\ep}, \Psi_{l,\ep})\|_{C^{2+\alpha}([0,l])}$ is bounded respect to $0<\ep<\ep_0$. Then  $\lim_{\ep\to 0}(\Phi_{l,\ep}, \Psi_{l,\ep})=(\Phi_l, \Psi_l)$ in $[C^2([0,l])]^2$, where $(\Phi_l, \Psi_l)$ is the unique positive solution of
 \bess\begin{cases}
-d_2\Phi_l''=f_2(\ud U_1(x), \Phi_l, \Psi_l),\; & 0<x<l,\\
-d_3\Psi_l''=f_3(\Phi_, \Psi_l),\; &0<x<l,\\
\Phi_l=\Psi_l=0,\;\;&x=0,\;l.
 	\end{cases}\eess
Similar to the proof of Lemma \ref{l2.2}, it can be deduced that
$\lim_{l\to\yy}(\Phi_l, \Psi_l)=(\ud U_2, \ud U_3)$ in $[C^2_{\rm loc}([0,\yy))]^2$, and $(\ud U_2, \ud U_3)$ is the unique bounded positive solution of \qq{2.27}. For any given $L>0$ and $\sigma>0$, there exist $l>L$ and $0<\ep\ll 1$ such that
 \bes
 (\Phi_{l,\ep}, \Psi_{l,\ep})\ge (\ud U_2-\sigma, \ud U_3-\sigma)\;\;\;{\rm in}\;\;[0, L].
 \lbl{3.6}\ees

For the $\ep$ and $l$ identified above. Since $\lim_{t\to\yy}h(t)=\yy$ and $\liminf_{t\to\infty}u_1(t,x)\geq\ud U_1(x)$ locally uniformly in $[0, \infty)$,  there exists a large $T_\ep^l$, such that
 \[h(t)>l, \ \ \ u_1(t,x)>\ud U_1(x)-\ep\ff 0\leq x\leq l, \; t\geq T_\ep^l.\]
Certainly, $u_1(t,x)\ge\rho_\ep(x)$ for $0\leq x\leq l$ and $t\geq T_\ep^l$.  Noticing that $u_i(T_\ep^l,0)=0$, $u_i(T_\ep^l,x)>0$ in $(0,l]$, $\partial_xu_i(T_\ep^l,0)>0$ and $u_i(T_\ep^l,\cdot)\in C^1([0,l])$, $i=2,3$. We can find two functions $\gamma_i(x)\in C^{2+\alpha}([0,l])$, such that
 \[\gamma_i(0)=\gamma_i(l)=0,\;\;\;{\rm and}\;\; 0<\gamma_i(x)\le u_i(T_\ep^l,x)\;\;\;{\rm in}\;\;(0,l),\;\;\;i=2,3.\]
Let $(\varphi_{l,\ep}, \psi_{l,\ep})$ be the unique positive solution of the following initial-boundary value problem
 \bess\begin{cases}
\partial_t\varphi_{l,\ep}-d_2\partial_{xx}\varphi_{l,\ep}=f_2(\rho_\ep(x), \varphi_{l,\ep}, \psi_{l,\ep}), &t>T_\ep, \ 0<x<l,\\
\partial_t\psi_{l,\ep}-d_3\partial_{xx}\psi_{l,\ep}=f_3(\varphi_{l,\ep}, \psi_{l,\ep}), &t>T_\ep^l, \ 0<x<l,\\
\varphi_{l,\ep}=\psi_{l,\ep}=0, &t\ge T_\ep^l,\; x=0,\,l, \\
\varphi_{l,\ep}(T_\ep^l,x)=\gamma_2(x),\;\; \psi_{l,\ep}(T_\ep^l,x)=\gamma_3(x), &x\in(0,l).
 	\end{cases}\eess
Then $(u_2, u_3)\ge(\varphi_{l,\ep}, \psi_{l,\ep})$ by the comparison principle, and $\lim_{t\to\yy}(\varphi_{l,\ep}, \psi_{l,\ep})=(\Phi_{l,\ep}, \Psi_{l,\ep})$ in $C^2([0,l])$ since $(\Phi_{l,\ep}, \Psi_{l,\ep})$ is the unique positive solution of \qq{3.5}. This combined with \qq{3.6} yields
 \bess
  \limsup_{t\to\yy}(u_2, u_3)\ge\lim_{t\to\yy}(\varphi_{l,\ep}, \psi_{l,\ep})
  \ge (\ud U_2-\sigma, \ud U_3-\sigma) \;\;\;{\rm in}\;\;[0, L].
  \eess
The arbitrariness of $\sigma$ and $L$ indicates that the first inequality of \qq{3.2} holds for $i=2,3$.
\end{proof}

\end{document}